\documentclass[11pt,reqno]{amsart}

\setlength{\textwidth}{6.3in} \setlength{\textheight}{9.25in}
\setlength{\evensidemargin}{0in} \setlength{\oddsidemargin}{0in}
\setlength{\topmargin}{-.3in}

\usepackage{amssymb}
\usepackage{amsmath,amsthm,amsfonts,amssymb,latexsym,mathrsfs,color,hyperref}
\usepackage{graphicx}
\usepackage{xspace}
\usepackage{multirow}
\usepackage{diagbox}
\usepackage{capt-of}
\usepackage{tikz}
\tikzset{
	level 1/.style = {sibling distance = 1.5cm},
	level 2/.style = {sibling distance = 0.8cm},
    level distance = 0.9 cm
}
\usetikzlibrary {decorations.pathmorphing}
\usetikzlibrary{matrix, positioning}
\tikzstyle{snakeline} = [decorate, decoration={snake, amplitude=.4mm, segment length=2mm}]

\usepackage{tikz-cd}
\usepackage[boxsize=1.3em, centerboxes]{ytableau}

\usepackage{tikz-qtree}
\tikzset{every tree node/.style={minimum width=0.1cm,draw,circle},
         blank/.style={draw=none},
         edge from parent/.style=
         {draw,edge from parent path={(\tikzparentnode) -- (\tikzchildnode)}},
         level distance=0.8cm}

\newtheorem{theorem}{Theorem}
\newtheorem{corollary}[theorem]{Corollary}
\newtheorem{proposition}[theorem]{Proposition}

\newtheorem{definition}[theorem]{Definition}

\newcommand{\dd}{{\rm dd}}
\newcommand{\val}{{\rm val}}

\newcommand{\lends}{{\rm exl}}
\newcommand{\mends}{{\rm exm}}
\newcommand{\rends}{{\rm exr}}

\newcommand{\udrun}{{\rm udrun\,}}

\newcommand{\fap}{{\rm fap\,}}

\newcommand{\plat}{{\rm plat\,}}
\newcommand{\ap}{{\rm ap\,}}

\newcommand{\des}{{\rm des\,}}
\newcommand{\cdes}{{\rm cdes\,}}

\newcommand{\exc}{{\rm exc\,}}

\newcommand{\cyc}{{\rm cyc\,}}

\newcommand{\mtn}{\mathcal{T}}

\newcommand{\msn}{\mathfrak{S}_n}

\newcommand{\ms}{\mathfrak{S}}

\newcommand{\lrf}[1]{\lfloor #1\rfloor}

\newcommand{\mqn}{\mathcal{Q}_n}

\newcommand{\asc}{{\rm asc\,}}

\newcommand{\Eulerian}[2]{\genfrac{<}{>}{0pt}{}{#1}{#2}}
\newcommand{\Stirling}[2]{\genfrac{\{}{\}}{0pt}{}{#1}{#2}}
\newcommand{\stirling}[2]{\genfrac{[}{]}{0pt}{}{#1}{#2}}

\linespread{1.25}

\title{Normal ordered grammars}
\author[S.-M.~Ma]{Shi-Mei Ma}
\address{School of Mathematics and Statistics,
        Northeastern University at Qinhuangdao,
         Hebei 066000, P.R. China}
\email{shimeimapapers@163.com (S.-M. Ma)}
\author[T.~Mansour]{Toufik Mansour}
\address{Department of Mathematics, University of Haifa, 3498838 Haifa, Israel}
\email{tmansour@univ.haifa.ac.il(T.~Mansour)}
\author{Jean Yeh}
\address{Department of Mathematics, National Kaohsiung Normal University, Kaohsiung 82444, Taiwan}
\email{chunchenyeh@nknu.edu.tw}
\author[Y.-N. Yeh]{Yeong-Nan Yeh}
\address{College of Mathematics and Physics, Wenzhou University, Wenzhou 325035, P.R. China}
\email{mayeh@math.sinica.edu.tw (Y.-N. Yeh)}
\subjclass[2010]{Primary 05A05; Secondary 05A20}
\begin{document}

\maketitle
\begin{abstract}
We introduce the theory of normal ordered grammars, which gives a natural generalization of the
normal ordering problem. To illustrate the main idea, we explore normal ordered
grammars associated with the Eulerian polynomials and the second-order Eulerian polynomials. In particular,
we present a normal ordered grammatical interpretation for the
$(\cdes,\cyc)$ $(p,q)$-Eulerian polynomials, where $\cdes$ and $\cyc$ are
the cycle descent and cycle statistics, respectively. 
The exponential generating function for a family of polynomials, generated by a normal ordered
grammar associated with the second-order Eulerian polynomials, reveals an interesting feature:
its expression involves the generating function for Catalan numbers as its exponent.
In the final part, we discuss some normal ordered grammars
related to the type $B$ Eulerian polynomials.
A normal ordered grammatical interpretation of the up-down run polynomial is also established.
\bigskip

\noindent{\sl Keywords}: Normal ordering problems; Grammars; Increasing trees; Eulerian polynomials
\end{abstract}
\date{\today}
\section{Introduction}
The {\it Weyl algebra} $W$ is the unital algebra generated by two symbols $D$ and $U$ satisfying the commutation relation
$DU-UD=I$,
where $I$ is the identity which we identify with ``1". In other words, $W=\langle D,U |DU-UD=I \rangle$.
An example
of the Weyl algebra is the algebra of differential operators acting on the ring of
polynomials in $x$, generated by $D=\frac{\mathrm{d}}{\mathrm{d}x}$ and $U$ acting as multiplication by $x$.
For any $w\in W$, the normal ordering problem is to find the normal
order coefficients $c_{i,j}$ in the expansion:
$$w=\sum_{i,j}c_{i,j}U^iD^j.$$
The following expansion has been studied as early as 1823 by Scherk~\cite[Appendix~A]{Blasiak10}:
\begin{equation}\label{Stirling-def}
(UD)^n=\sum_{k=0}^n\Stirling{n}{k}U^kD^k,
\end{equation}
where $\Stirling{n}{k}$ is the {\it Stirling number of the second kind}, i.e., the number of partitions of the set $[n]=\{1,2,\ldots,n\}$ into $k$ blocks (non-empty subsets).
According to~\cite[Proposition~A.2]{Blasiak10}, one has
\begin{equation}\label{stirling-def}
(\mathrm{e}^xD)^n=\mathrm{e}^{nx}\sum_{k=0}^n\stirling{n}{k}D^k,
\end{equation}
where $\stirling{n}{k}$ is the (signless) Stirling number of the first
kind, i.e.,
the number of permutations of $[n]$ with $k$ cycles.
Many generalizations and variations of~\eqref{Stirling-def} and~\eqref{stirling-def} occur naturally in quantum physics, combinatorics and algebra.
The reader is referred to Schork~\cite{Schork21} for survey and~\cite{Mansour10,Engbers15,Eu17} for recent progress
on this subject.

A {\it context-free grammar} $G$ over an alphabet
$V$ is defined as a set of substitution rules replacing a letter in $V$ by a formal function over $V$.
As usual, the formal function may be a polynomial or a Laurent polynomial.
The formal derivative $D_G$ with respect to $G$ satisfies the derivation rules:
$D_G(u+v)=D_G(u)+D_G(v),~D_G(uv)=D_G(u)v+uD_G(v)$.
Recently, context-free grammars have been widely used, see~\cite{Chen2301,Chen23,Ji24,Ma23,Ma24} for instances.

In this paper, we always let $D_G$ be the formal derivative associated with the grammar $G$. As an illustration, we
recall a classical result, which may be seen as a dual result of~\eqref{Stirling-def}.
\begin{proposition}[{\cite{Chen93}}]\label{grammar01}
If $G=\{a\rightarrow ab, b\rightarrow b\}$, then $D_{G}^n(a)=a\sum_{k=0}^n\Stirling{n}{k}b^k$.
\end{proposition}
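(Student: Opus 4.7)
The plan is to prove the identity by induction on $n$, using the classical recurrence for the Stirling numbers of the second kind, namely
\[
\Stirling{n+1}{k}=k\Stirling{n}{k}+\Stirling{n}{k-1},
\]
together with the derivation rules satisfied by $D_G$. The base case $n=0$ is immediate: $D_G^0(a)=a=a\Stirling{0}{0}b^0$.

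For the inductive step, I first record the action of $D_G$ on the monomials $ab^k$. Since $D_G(b)=b$, the Leibniz rule gives $D_G(b^k)=kb^{k-1}D_G(b)=kb^k$. Combining this with $D_G(a)=ab$ and the product rule yields
\[
D_G(ab^k)=D_G(a)b^k+aD_G(b^k)=ab^{k+1}+kab^k.
\]
Now, assuming the formula holds for $n$, I apply $D_G$ to $a\sum_k\Stirling{n}{k}b^k$ term by term, substitute the identity just derived, and reindex the first sum by $k\mapsto k-1$. The coefficient of $ab^k$ in $D_G^{n+1}(a)$ then becomes exactly $\Stirling{n}{k-1}+k\Stirling{n}{k}=\Stirling{n+1}{k}$, which closes the induction.

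There is no real obstacle here; the proof is essentially a direct verification that the grammar $G$ encodes the Stirling recurrence, with the factor $a$ in front tracking the fact that $D_G$ is applied to $a$ rather than to $1$. The only point requiring minor care is the treatment of boundary terms when reindexing (using $\Stirling{n}{-1}=0$ and $\Stirling{n}{n+1}=0$), so that the sum can be written uniformly over $0\le k\le n+1$.
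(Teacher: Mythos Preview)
Your proof is correct. Note, however, that the paper does not actually prove this proposition: it is stated as a classical result, attributed to Chen~\cite{Chen93}, and used only to illustrate the notion of a context-free grammar before moving on to normal ordered grammars. Your induction via the Stirling recurrence $\Stirling{n+1}{k}=k\Stirling{n}{k}+\Stirling{n}{k-1}$ is precisely the standard verification, and there is nothing to compare against in the paper itself.
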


The following simple result suggests that it is natural to consider normal ordering problems associated with grammars.
\begin{proposition}\label{prop-3}
If $G=\{x\rightarrow 1\}$, then one has $\left(xD_G\right)^n=\sum_{k=0}^n\Stirling{n}{k}x^kD_{G}^k$.
\end{proposition}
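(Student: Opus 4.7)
The plan is to prove the identity by induction on $n$, treating both sides as operators acting on polynomials in $x$. The key observation is that the grammar rule $x\to 1$ means $D_G(x)=1$, so the Leibniz rule forces $D_G(x^k)=kx^{k-1}$, making $D_G$ behave on $\mathbb{C}[x]$ exactly like the usual derivative $\partial/\partial x$. Hence the statement is essentially the classical operator identity $(x\partial_x)^n = \sum_k \Stirling{n}{k} x^k \partial_x^k$, which Scherk's expansion~\eqref{Stirling-def} already captures on the algebraic side.

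For the base case $n=0$ (or $n=1$), both sides are the identity operator (resp.\ $xD_G$), using $\Stirling{0}{0}=1$. For the inductive step, I would apply $xD_G$ to the right-hand side and compute
\begin{equation*}
xD_G\bigl(x^k D_G^k\bigr) \;=\; x\bigl(D_G(x^k)\,D_G^k + x^k D_G^{k+1}\bigr)\;=\;k x^k D_G^k + x^{k+1} D_G^{k+1},
\end{equation*}
where the first equality uses the derivation property $D_G(uv)=D_G(u)v+uD_G(v)$ and the second uses $D_G(x^k)=kx^{k-1}$. Summing over $k$ and re-indexing the second sum yields
\begin{equation*}
(xD_G)^{n+1} \;=\; \sum_k \Bigl(k\Stirling{n}{k}+\Stirling{n}{k-1}\Bigr)x^k D_G^k.
\end{equation*}
The bracketed coefficient is exactly $\Stirling{n+1}{k}$ by the standard Stirling recurrence, finishing the induction.

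There is no real obstacle beyond bookkeeping: once one recognizes that $D_G$ on polynomials is a genuine derivation with $D_G(x)=1$, all manipulations reduce to the classical differential-operator calculation. An alternative route would be to deduce the statement from Proposition~\ref{grammar01} by a substitution argument, but the direct induction seems cleanest and has the advantage of exhibiting the Stirling recurrence as the combinatorial content of the identity.
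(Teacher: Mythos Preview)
Your inductive argument is correct. The paper, however, does not supply an explicit proof of this proposition: it is stated as a ``simple result'' immediately after recalling Scherk's expansion~\eqref{Stirling-def} in the Weyl algebra $W=\langle D,U\mid DU-UD=I\rangle$, with the remark that $D=\mathrm{d}/\mathrm{d}x$ and $U=$ multiplication by $x$ realize $W$. The intended justification is thus that for $G=\{x\to 1\}$ the operator $D_G$ is exactly $\mathrm{d}/\mathrm{d}x$ on polynomials, so $(xD_G)^n$ is literally $(UD)^n$ in this realization and the identity is~\eqref{Stirling-def} verbatim. Your induction via the Stirling recurrence $\Stirling{n+1}{k}=k\Stirling{n}{k}+\Stirling{n}{k-1}$ is a perfectly good self-contained alternative; it has the advantage of not relying on~\eqref{Stirling-def} as a black box, and indeed reproves it. One small point of phrasing: when you write $D_G(x^kD_G^k)=D_G(x^k)\,D_G^k+x^kD_G^{k+1}$, you are using the operator commutation $D_G\cdot x^k=x^kD_G+kx^{k-1}$ rather than the Leibniz rule on functions (since $D_G^k$ is an operator, not an element of the ring); the computation is of course correct either way.
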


Assume that $u:=u(x,y),v:=v(x,y)$ and $w:=w(x,y)$ are given functions. For the grammar $G=\left\{x\rightarrow u(x,y),~y\rightarrow v(x,y)\right\}$,
we note that the powers of $w(x,y)D_G$ can be expressed as
$$\left(w(x,y)D_G\right)^n=\sum_{k=0}^n \xi_{n,k}(x,y)w^k(x,y)D_G^k.$$

In Section~\ref{section02}, we consider normal ordered grammars associated with the Eulerian polynomials.
In particular, in Theorem~\ref{Anxyq} we find that
if $G=\{x\rightarrow y, y\rightarrow py\}$, then
$$(xD_{G})^n|_{D_{G}=q}=\sum_{\pi\in\msn}x^{n-\exc(\pi)}y^{\exc(\pi)}p^{\cdes(\pi)}q^{\cyc(\pi)},$$
where $\exc,\cdes$ and $\cyc$ are the excedance, cycle descent and cycle statistics, respectively.
In Section~\ref{section03}, we consider normal ordered grammars associated with the second-order Eulerian polynomials.
If $G=\{x\rightarrow y^2, y\rightarrow y^2\}$, one has
$$(xD_{G})^n=\sum_{k=1}^n\sum_{\ell=k}^nC_{n,k,\ell}x^\ell y^{2n-k-\ell}D_{G}^k.$$
Define
$$\widetilde{C}_n(x,y,z)=\sum_{k=1}^n\sum_{\ell=k}^nC_{n,k,\ell}x^\ell y^{2n-k-\ell}z^k,
~\widetilde{C}(x,x,z;t)=\sum_{n=0}^\infty\widetilde{C}_n(x,x,z)\frac{t^n}{n!}.$$
In Theorem~\ref{thm14}, we give a remarkable explicit formula:
$$\widetilde{C}(x,x,z;t)={\mathrm{e}}^{xzt\cdot \operatorname{Cat}(x^2t/2)},$$
where $\operatorname{Cat}(z)=\frac{1-\sqrt{1-4z}}{2z}$ is the generating function for the Catalan numbers.
In Section~\ref{section04}, we discuss some normal ordered grammars
related to the type $B$ Eulerian polynomials. At the end of this paper, we point out that
if $G'=\{x\rightarrow y, y\rightarrow x\}$, then
$$(xD_{G'})^n|_{D_{G'}=1}=y^nT_n\left(\frac{x}{y}\right),$$
where $T_n(x)$ is the up-down run polynomial over permutations in the symmetric group $\msn$.
\section{Normal ordered grammars associated with Eulerian polynomials}\label{section02}
%
The {\it (type $A$) Eulerian polynomials} $A_n(x)$ can be defined by the differential expression:
\begin{equation*}\label{Anx-poly-def}
\left(x\frac{\mathrm{d}}{\mathrm{d}x}\right)^n\frac{1}{1-x}=\sum_{k=0}^\infty k^nx^k=\frac{A_n(x)}{(1-x)^{n+1}}.
\end{equation*}
They satisfy the recurrence relation
\begin{equation}\label{Eulerian01}
A_{n}(x)=nxA_{n-1}(x)+x(1-x)\frac{\mathrm{d}}{\mathrm{d}x}A_{n-1}(x),~A_0(x)=1.
\end{equation}
Let $\msn$ be the {\it symmetric group} of all permutations of $[n]$. For $\pi=\pi(1)\pi(2)\cdots\pi(n)\in\msn$,
the index $i$ is a {\it descent} (resp.~{\it excedance}) if $\pi(i)>\pi(i+1)$ (resp.~$\pi(i)>i$).
Let $\des(\pi)$ and $\exc(\pi)$ be the numbers of descents and excedances of $\pi$, respectively.
The {\it Eulerian polynomials} can also be defined by $$A_n(x)=\sum_{\pi\in\msn}x^{\des(\pi)+1}=\sum_{\pi\in\msn}x^{\exc(\pi)+1}=\sum_{k=1}^n\Eulerian{n}{k}x^k,$$
where $\Eulerian{n}{k}$ are known as the {\it Eulerian numbers} (see~\cite[A008292]{Sloane}).
It is well known that
\begin{equation}\label{Eulerian02}
\Eulerian{n}{k}=k\Eulerian{n-1}{k}+(n-k+1)\Eulerian{n-1}{k-1}.
\end{equation}

In~\cite{Dumont96}, Dumont obtained the context-free grammar for Eulerian polynomials by using a grammatical labeling of circular permutations.
\begin{proposition}[{\cite[Section~2.1]{Dumont96}}]\label{grammar03}
Let $G=\{a\rightarrow ab, b\rightarrow ab\}$.
Then for $n\geqslant 1$, one has
\begin{equation*}
D_{G}^n(a)=D_{G}^n(b)=b^{n+1}A_n\left(\frac{a}{b}\right).
\end{equation*}
\end{proposition}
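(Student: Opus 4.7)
The plan is to verify the identity by induction on $n$ and reduce the claim to matching two recurrences, one produced by applying $D_G$ and one being the classical Eulerian recurrence~\eqref{Eulerian02}. First I would note the initial equality $D_G(a)=ab=D_G(b)$, which immediately forces $D_G^n(a)=D_G^n(b)$ for every $n\geqslant 1$; so the second equality is free and only the identity $D_G^n(a)=b^{n+1}A_n(a/b)$ needs argument.

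Next, a short induction shows that $D_G^n(a)$ is homogeneous of degree $n+1$ in the variables $a,b$, since each of the rules $a\to ab$ and $b\to ab$ raises total degree by exactly one. Writing
\[
D_G^n(a)=\sum_{k=1}^{n}E(n,k)\,a^{k}b^{n+1-k},
\]
I would apply the derivation rule term by term:
\[
D_G\!\left(a^{k}b^{n+1-k}\right)=k\,a^{k-1}(ab)b^{n+1-k}+(n+1-k)a^{k}b^{n-k}(ab)=k\,a^{k}b^{n+2-k}+(n+1-k)a^{k+1}b^{n+1-k}.
\]
Summing and re-indexing yields
\[
E(n+1,k)=k\,E(n,k)+(n+2-k)\,E(n,k-1),
\]
which is exactly the Eulerian recurrence~\eqref{Eulerian02} with $n$ replaced by $n+1$. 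Together with the base case $D_G(a)=ab$, giving $E(1,1)=1=\Eulerian{1}{1}$, this forces $E(n,k)=\Eulerian{n}{k}$, and factoring out $b^{n+1}$ recovers $D_G^n(a)=b^{n+1}A_n(a/b)$.

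There is no genuinely hard step; the whole proof is the routine verification that the coefficients satisfy the Eulerian recursion. The only place that demands a touch of care is the re-indexing when passing from $D_G^n(a)$ to $D_G^{n+1}(a)$, so as to align the factor $(n+2-k)$ with $\Eulerian{n+1}{k}$ rather than with $\Eulerian{n}{k}$. As an alternative, Dumont's original approach gives a bijective proof by setting up a grammatical labeling of circular permutations in which each application of $D_G$ corresponds to the insertion of a new maximal element, with $a$-labels tracking excedance-type positions and $b$-labels tracking the rest; this is more conceptual but requires considerably more setup than the recurrence-matching plan outlined above.
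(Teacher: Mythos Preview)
Your proof is correct. The paper does not supply its own proof of this proposition; it simply cites Dumont's result and remarks that Dumont obtained it ``by using a grammatical labeling of circular permutations.'' Your argument is therefore not a reproduction of the paper's proof but a self-contained alternative: you match the coefficient recurrence coming from $D_G$ against the classical Eulerian recursion~\eqref{Eulerian02}, which is more elementary and requires no combinatorial setup. Dumont's original route, which you correctly describe at the end, builds a bijection between applications of $D_G$ and insertions into circular permutations, so that the $a$- and $b$-labels directly record an excedance-type statistic; that approach is more conceptual and explains \emph{why} the Eulerian numbers appear, whereas yours simply verifies that they do. Both are valid, and for the purposes of this paper---where the proposition is only quoted as background---your short inductive verification is entirely adequate.
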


Note that Proposition~\ref{grammar03} can be restated as
\begin{equation}\label{xDG701}
(xD_{G'})^n(x)=(xD_{G'})^n(y)=y^{n+1}A_n\left(\frac{x}{y}\right),~{\text{where $G'=\{x\rightarrow y,~y\rightarrow y\}$}};
\end{equation}
\begin{equation}\label{xDG702}
(xyD_{G''})^n(x)=(xyD_{G''})^n(y)=y^{n+1}A_n\left(\frac{x}{y}\right),~{\text{where $G''=\{x\rightarrow 1,~y\rightarrow 1\}$}}.
\end{equation}
In order to investigate the powers of $xD_{G'}$ and $xyD_{G''}$, we need to introduce some definitions.
The {\it degree} of a vertex in a tree is referred to the number of its children. We say that $T$ is
a {\it planted binary (resp.~full binary) increasing plane tree} on $[n]$ if it is a binary (resp.~full binary)
tree with $n$ (resp.~$n+1$) unlabeled leaves and $n$ labeled internal vertices, and satisfying the
following conditions (see Figures~\ref{Fig01} and~\ref{Fig03-xy} for examples, where we give every right leaf a weight $y$, and each of the other leaves a weight $x$):
\begin{itemize}
  \item [$(i)$] Internal vertices are labeled by $1,2,\ldots,n$. The node labelled $1$ is distinguished as the root and it has only one child (resp.~it also has two children);
 \item [$(ii)$] Excluding (resp. Including) the root, each internal node has exactly two ordered children, which are referred to as a left child and a right child;
  \item [$(iii)$] For each $2\leqslant i\leqslant n$, the labels of the internal nodes in the unique
path from the root to the internal node labelled $i$ form an increasing sequence.
\end{itemize}

\tikzset{
  solid node/.style={circle,draw,inner sep=1.2,fill=black},
  hollow node/.style={circle,draw,inner sep=1.2},
  level distance = 0.6 cm,
  level 1/.style = {sibling distance = 0.8cm},
  level 2/.style = {sibling distance = 0.6cm},
  level 3/.style = {sibling distance = 0.6cm},
  every level 0 node/.style={draw,hollow node},
  every level 1 node/.style={draw,solid node},
  every level 2 node/.style={draw,solid node},
  every level 3 node/.style={draw,solid node}
}

\begin{figure}[ht!]
\begin{center}
\hspace*{\stretch{1}}
\begin{tikzpicture}
\Tree [.\node[label={1}]{};
        \edge; [.\node[label=left:{2}]{};
            \edge; [.\node[label=left:{3}]{};
                \edge; [.\node [label=below:{$x$}] {}; ]
                \edge; [.\node [label=below:{$y$}] {}; ]]
            \edge; [.\node [label=below:{$y$}] {}; ]]
        ]
\end{tikzpicture};\hspace*{\stretch{1}}
\begin{tikzpicture}
\Tree [.\node[label={1}]{};
        \edge; [.\node[label=left:{2}]{};
        	\edge; [.\node [label=below:{$x$}] {}; ]
            \edge; [.\node[label=right:{3}]{};
                \edge; [.\node [label=below:{$x$}] {}; ]
                \edge; [.\node [label=below:{$y$}] {}; ]]]
        ]
\end{tikzpicture}\hspace*{\stretch{1}}
\end{center}
\caption{The planted binary
increasing plane trees on $[3]$ encoded by $xy^2{D_{G'}}$ and $x^2yD_{G'}$, respectively .}
\label{Fig01}
\end{figure}
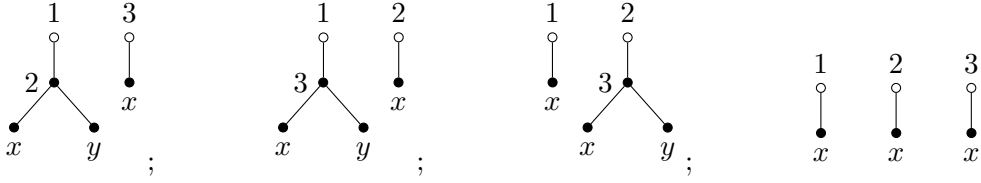
\begin{figure}[ht!]
\begin{center}
\hspace*{\stretch{1}}
\begin{tikzpicture}
    \Tree [.\node[label={1}]{};
        \edge; [.\node[label=left:{2}]{};
        	\edge; [.\node [label=below:{$x$}] {}; ]
        	\edge; [.\node [label=below:{$y$}] {}; ]]
        ]

  \begin{scope}[xshift=1cm]
		\Tree [.\node[label={3}]{};
        	\edge; [.\node[label=below:{$x$}]{};]
        ]
  \end{scope}
\end{tikzpicture};\hspace*{\stretch{1}}
\begin{tikzpicture}
    \Tree [.\node[label={1}]{};
        \edge; [.\node[label=left:{3}]{};
        	\edge; [.\node [label=below:{$x$}] {}; ]
        	\edge; [.\node [label=below:{$y$}] {}; ]]
        ]

  \begin{scope}[xshift=1cm]
    \Tree [.\node[label={2}]{};
        \edge; [.\node[label=below:{$x$}]{};]
        ]
  \end{scope}
\end{tikzpicture};\hspace*{\stretch{1}}
\begin{tikzpicture}
    \Tree [.\node[label={1}]{};
        \edge; [.\node[label=below:{$x$}]{};]
        ]

  \begin{scope}[xshift=1cm]
        \Tree [.\node[label={2}]{};
        	\edge; [.\node[label=left:{3}]{};
        		\edge; [.\node [label=below:{$x$}] {}; ]
        		\edge; [.\node [label=below:{$y$}] {}; ]]
        ]
  \end{scope}
\end{tikzpicture};\hspace*{\stretch{1}}
\begin{tikzpicture}
    \Tree [.\node[label={1}]{};
        \edge; [.\node[label=below:{$x$}]{};]
        ]

  \begin{scope}[xshift=1cm]
        \Tree [.\node[label={2}]{};
        	\edge; [.\node[label=below:{$x$}]{};]
        ]
  \end{scope}

  \begin{scope}[xshift=2cm]
        \Tree [.\node[label={3}]{};
        	\edge; [.\node[label=below:{$x$}]{};]
        ]
  \end{scope}
\end{tikzpicture}\hspace*{\stretch{1}}
\end{center}
\caption{Three 2-forests on $[3]$ encoded by $x^2yD_{G'}^2$, and the 3-forest on $[3]$ encoded by $x^3D_{G'}^3$.}
\label{Fig02}
\end{figure}

\begin{definition}
We say that $F$ is a {\it binary (resp.~full binary) $k$-forest} on $[n]$ if it has $k$ connected components, each connected component is a
planted binary (resp.~full binary) increasing plane tree, the labels of the roots are increasing from left to right and the labels of the $k$-forest form a partition of $[n]$.
\end{definition}
\begin{theorem}\label{Eulerian-tree}
Let $G'=\{x\rightarrow y, y\rightarrow y\}$.
For any $n\geqslant 1$, one has
\begin{equation}\label{aDGthm}
(xD_{G'})^n=\sum_{k=1}^n\sum_{\ell=k}^nA_{n,k,\ell}x^\ell y^{n-\ell}D_{G'}^k,
\end{equation}
where the coefficients $A_{n,k,\ell}$ satisfy the recurrence relation
\begin{equation}\label{Anki-recu}
A_{n+1,k,\ell}=\ell A_{n,k,\ell}+(n-\ell+1)A_{n,k,\ell-1}+A_{n,k-1,\ell-1},
\end{equation}
with the initial conditions $A_{1,1,1}=1$ and $A_{1,k,\ell}=0$ if $(k,\ell)\neq (1,1)$.
The coefficient $A_{n,k,\ell}$ counts binary $k$-forests on $[n]$ with $n-\ell$ right leaves.
\end{theorem}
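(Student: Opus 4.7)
The plan is to prove the theorem in two stages. First, I would establish the operator expansion \eqref{aDGthm} together with the recurrence \eqref{Anki-recu} by induction on $n$ via a direct algebraic computation. Second, I would show that the same recurrence is obeyed by the refined enumerator of binary $k$-forests, which then pins down the combinatorial interpretation.

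For the algebraic stage, the base case $n=1$ is immediate, since $(xD_{G'})^{1} = x\,y^{0}\,D_{G'}$ forces $A_{1,1,1}=1$ and all other $A_{1,k,\ell}=0$. For the inductive step, I write $(xD_{G'})^{n+1} = xD_{G'}\cdot(xD_{G'})^n$ and use the Leibniz rule for the derivation $D_{G'}$ to push it past the monomials in \eqref{aDGthm}. Because $D_{G'}(x^\ell y^{n-\ell}) = \ell x^{\ell-1}y^{n-\ell+1} + (n-\ell)x^\ell y^{n-\ell}$, this yields the operator identity
\[
xD_{G'}\cdot x^\ell y^{n-\ell}D_{G'}^{k} \;=\; \ell\,x^\ell y^{n-\ell+1}D_{G'}^{k} \;+\; (n-\ell)\,x^{\ell+1}y^{n-\ell}D_{G'}^{k} \;+\; x^{\ell+1}y^{n-\ell}D_{G'}^{k+1}.
\]
Re-indexing each summand into the canonical form $x^{\ell'}y^{(n+1)-\ell'}D_{G'}^{k'}$ and collecting the coefficients that contribute to a fixed pair $(k,\ell)$ reproduces exactly the three terms of \eqref{Anki-recu}, and simultaneously confirms \eqref{aDGthm} at level $n+1$.

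For the combinatorial stage, let $B_{n,k,\ell}$ count binary $k$-forests on $[n]$ with $n-\ell$ right leaves; since the unique such forest for $n=1$ has weight $x\,D_{G'}$, we have $B_{1,1,1}=1$. I classify every binary $k$-forest on $[n+1]$ according to the role of the maximum label $n+1$. Because $n+1$ exceeds every other label, its children can contain no further internal nodes, so either $n+1$ is the root of its component (and its unique child is an $x$-leaf) or $n+1$ is a non-root internal node whose two children are leaves (a left $x$-leaf and a right $y$-leaf). Removing or collapsing $n+1$ then gives three cases: \emph{(i)} $n+1$ is a left child of a non-root internal node \emph{or} the unique child of a planted root, and collapse to an $x$-leaf recovers an $(n,k,\ell)$-forest, with $\ell$ choices in the forward direction; \emph{(ii)} $n+1$ is a right child of a non-root internal node, and collapse to a $y$-leaf recovers an $(n,k,\ell-1)$-forest, with $n-\ell+1$ choices; \emph{(iii)} $n+1$ is the root of its own component, necessarily placed as the rightmost, and deletion recovers an $(n,k-1,\ell-1)$-forest. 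Summing, $B_{n+1,k,\ell} = \ell B_{n,k,\ell} + (n-\ell+1)B_{n,k,\ell-1} + B_{n,k-1,\ell-1}$, matching \eqref{Anki-recu} and forcing $B_{n,k,\ell}=A_{n,k,\ell}$.

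The main obstacle lies in the combinatorial stage: I have to verify that the three classes in the case analysis are mutually exclusive and exhaustive, and that the collapse operation preserves the definition of a planted binary increasing plane tree — in particular, each root must still have exactly one child afterward. Both points follow from the maximality of the label $n+1$, which ensures that its children carry no internal descendants and that its parent (if any) is a previously existing internal vertex whose vacated slot is refilled by a single leaf of the appropriate side.
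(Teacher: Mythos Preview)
Your proposal is correct and follows essentially the same approach as the paper: the algebraic induction via the Leibniz rule is identical to the paper's computation, and your combinatorial argument is the inverse (remove $n+1$ and collapse) of the paper's insertion argument (attach $n+1$ at an $x$-leaf, a $y$-leaf, or as a new planted root), yielding the same three-term recurrence.
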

\begin{proof}
(A) The first few $(xD_{G'})^n$ are given as follows:
\begin{align*}
(xD_{G'})^2&=xyD_{G'}+x^2D_{G'}^2,~(xD_{G'})^3=(xy^2+x^2y)D_{G'}+3x^2yD_{G'}^2+x^3D_{G'}^3,\\
(xD_{G'})^4&=(xy^3+4x^2y^2+x^3y)D_{G'}+(7x^2y^2+4x^3y)D_{G'}^2+6x^3yD_{G'}^3+x^4D_{G'}^4.
\end{align*}
Thus the expansion~\eqref{aDGthm} holds for $n\leqslant 4$. Assume that it holds for $n$.
Since $$(xD_{G'})^{n+1}=xD_{G'}\left(xD_{G'}\right)^n=xD_{G'}\left(\sum_{k=1}^n\sum_{\ell=k}^nA_{n,k,\ell}x^\ell y^{n-\ell}D_{G'}^k\right),$$
it follows that
 \begin{equation}\label{xDGN}
(xD_{G'})^{n+1}=\sum_{k=1}^n\sum_{\ell=k}^nA_{n,k,\ell}\left[\left(\ell x^\ell y^{n-\ell+1}+(n-\ell)x^{\ell+1}y^{n-\ell}\right)D_{G'}^k+x^{\ell+1}y^{n-\ell}D_{G'}^{k+1}\right].
\end{equation}
Extracting the coefficient of $x^\ell y^{n-\ell+1}D_{G'}^k$ on both sides leads to the recursion~\eqref{Anki-recu}.

(B) Let $F$ be a binary $k$-forest.
We first give a labeling of $F$ as follows. Label each planted binary increasing plane tree by $D_{G'}$, a right leaf by $y$, and all the other leaves are labeled by $x$.
The weight of $F$ is defined to be the product of the labels of all trees in $F$. See Figure~\ref{Fig02} for illustrations. Assume that the weight of $F$ is $x^\ell y^{n-\ell}D_{G'}^k$.
Let us examine how to generate a forest $F'$ on $[n+1]$ by adding the vertex $n+1$ to $F$.
We have the following three possibilities:
\begin{itemize}
  \item [$c_1$:] When the vertex $n+1$ is attached to a leaf with label $x$, then $n+1$ becomes a internal node with two children. The weight of $F'$ is $x^{\ell}y^{n-\ell+1}D_{G'}^{k}$;
  \item [$c_2$:] When the vertex $n+1$ is attached to a leaf with label $y$, then $n+1$ becomes a internal node with two children. The weight of $F'$ is $x^{\ell+1}y^{n-\ell}D_{G'}^{k}$;
  \item [$c_3$:] If the vertex $n+1$ is added as a new root, then $F'$ becomes a binary $(k+1)$-forest and the child of $n+1$ has a label $x$.
  The weight of $F'$ is given by $x^{\ell+1}y^{n-\ell}D_{G'}^{k+1}$.
\end{itemize}
As each case corresponds to a term in the right of~\eqref{xDGN}, then $(xD_{G'})^{n+1}$
equals the sum of the weights of all binary $k$-forests on $[n+1]$, where $1\leqslant k\leqslant n+1$. This completes the proof.
\end{proof}

Comparing~\eqref{Anki-recu} with~\eqref{Eulerian02}, we see that $A_{n+1,1,\ell}=\Eulerian{n}{\ell}$.
We define $$A_n(x,y,z)=\sum_{k=1}^n\sum_{\ell=k}^nA_{n,k,\ell}x^\ell y^{n-\ell}z^k.$$
 Multiplying both sides of~\eqref{Anki-recu} by $x^\ell y^{n+1-\ell}z^k$ and summing over all $\ell$ and $k$, we get
\begin{equation}\label{recu-Axyz}
 A_{n+1}(x,y,z)=x(n+z)A_n(x,y,z)+x(y-x)\frac{\partial}{\partial x}A_n(x,y,z),~A_0(x,y,z)=1.
\end{equation}
Combining~\eqref{Eulerian01} and~\eqref{recu-Axyz}, we find that $A_n(x,1,1)=A_n(x)$, where $A_n(x)$ is the Eulerian polynomial. Note that the sum of exponents of $x$ and $y$ equals $n$ in
a general term $x^\ell y^{n-\ell}z^k$.
By induction, it is easy to verify that $yA_n(1,y,1)=A_n(y)$.
Using~\eqref{Anki-recu}, we notice that $A_{n,k,k-1}=0$ and so $A_{n+1,k,k}=kA_{n,k,k}+A_{n,k-1,k-1}$.
Thus $A_{n,k,k}$ satisfies the same
recurrence and initial conditions as $\Stirling{n}{k}$.
In conclusion, we obtain the following result.
\begin{corollary}\label{Dual-result}
For $n\geqslant 1$, we have
\begin{align*}
&\sum_{k=1}^nA_{n,k,k}z^k=\sum_{k=1}^n\Stirling{n}{k}z^k,~yA_{n}(x,y,1)=\sum_{\ell=1}^n\Eulerian{n}{\ell}x^\ell y^{n+1-\ell},\\
&A_n(1,1,z)=z(z+1)\cdots(z+n-1)=\sum_{k=1}^n\stirling{n}{k}z^k,\\
&A_n(x)=A_n(x,1,1)=xA_n(1,x,1)=\frac{\partial}{\partial z}A_{n+1}(x,y,z)|_{y=1,z=0}.
\end{align*}
\end{corollary}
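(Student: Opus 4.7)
The plan is to derive each equality by an appropriate specialization of either the coefficient recurrence~\eqref{Anki-recu} or the generating polynomial recurrence~\eqref{recu-Axyz}, using only facts already established immediately before the statement of the corollary.

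First, for $\sum_{k=1}^{n}A_{n,k,k}z^k=\sum_{k=1}^{n}\Stirling{n}{k}z^k$, the paper has already noted that $A_{n,k,k-1}=0$, so setting $\ell=k$ in~\eqref{Anki-recu} collapses it to $A_{n+1,k,k}=kA_{n,k,k}+A_{n,k-1,k-1}$. This is the Stirling-of-the-second-kind recurrence, and the initial value $A_{1,1,1}=1=\Stirling{1}{1}$ matches, so induction on $n$ closes the identity. For $A_n(1,1,z)=z(z+1)\cdots(z+n-1)$, I would specialize~\eqref{recu-Axyz} at $x=y=1$: the prefactor $x(y-x)$ kills the partial-derivative term, leaving $A_{n+1}(1,1,z)=(n+z)A_n(1,1,z)$ with $A_0(1,1,z)=1$; iterating yields the claimed rising factorial, which is the classical generating polynomial of the signless Stirling numbers of the first kind.

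Next, for $yA_n(x,y,1)=\sum_{\ell=1}^{n}\Eulerian{n}{\ell}x^\ell y^{n+1-\ell}$ and the pair of consequences $A_n(x)=A_n(x,1,1)=xA_n(1,x,1)$, the key structural remark is that, directly from the defining sum $A_n(x,y,z)=\sum A_{n,k,\ell}x^\ell y^{n-\ell}z^k$, the polynomial $A_n(x,y,z)$ is homogeneous of degree $n$ in $(x,y)$. The paper's observation $A_n(x,1,1)=A_n(x)$ follows from comparing~\eqref{recu-Axyz} at $y=z=1$ with~\eqref{Eulerian01} and matching initial conditions; homogeneity then gives $yA_n(x,y,1)=y\cdot y^{n}A_n(x/y,1,1)=y^{n+1}A_n(x/y)=\sum_{\ell}\Eulerian{n}{\ell}x^\ell y^{n+1-\ell}$. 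Substituting $(x,y)\mapsto(1,x)$ yields $xA_n(1,x,1)=\sum_{\ell}\Eulerian{n}{\ell}x^{n+1-\ell}$, and the reindexing $j=n+1-\ell$ combined with the palindromicity $\Eulerian{n}{n+1-j}=\Eulerian{n}{j}$ of the Eulerian numbers recovers $A_n(x)$.

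Finally, for $A_n(x)=\frac{\partial}{\partial z}A_{n+1}(x,y,z)|_{y=1,z=0}$, taking $\partial/\partial z$ and then setting $z=0$ extracts the coefficient of $z^1$ in $A_{n+1}(x,y,z)$, namely $\sum_{\ell}A_{n+1,1,\ell}x^\ell y^{n+1-\ell}$; the observation $A_{n+1,1,\ell}=\Eulerian{n}{\ell}$ (obtained by putting $k=1$ in~\eqref{Anki-recu}, noting $A_{n,0,\ell-1}=0$, and comparing with~\eqref{Eulerian02}) then produces $A_n(x)$ upon setting $y=1$. The only mildly nonroutine step in the whole proof is invoking the Eulerian symmetry to pass from $xA_n(1,x,1)$ back to $A_n(x)$; every other piece is a clean specialization of the two recurrences already in hand.
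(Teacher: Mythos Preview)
Your proof is correct and follows essentially the same route as the paper: the paper's justification consists of the short paragraph immediately preceding the corollary, and you have simply unpacked those hints (the recurrence collapse for $A_{n,k,k}$, the specialization of~\eqref{recu-Axyz} at $x=y=1$ and at $y=z=1$, the homogeneity remark, and the identification $A_{n+1,1,\ell}=\Eulerian{n}{\ell}$). The only place you deviate is in deriving $xA_n(1,x,1)=A_n(x)$: the paper just says ``by induction, it is easy to verify that $yA_n(1,y,1)=A_n(y)$,'' whereas you obtain it from homogeneity plus the palindromicity $\Eulerian{n}{\ell}=\Eulerian{n}{n+1-\ell}$; your route is arguably cleaner and makes explicit why the identity holds, while the paper's induction avoids importing the symmetry of the Eulerian numbers as an outside fact.
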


In~\cite{FS70},
Foata and Sch\"utzenberger introduced the $q$-Eulerian polynomials
\begin{equation*}\label{anxq-def}
A_n(x;q)=\sum_{\pi\in\msn}x^{\exc(\pi)}q^{\cyc(\pi)}.
\end{equation*}
The polynomials $A_n(x;q)$ satisfy the recurrence relation (see~\cite[Proposition~7.2]{Brenti00}):
\begin{equation}\label{anxq-rr}
A_{n+1}(x;q)=(nx+q)A_{n}(x;q)+x(1-x)\frac{d}{d x}A_{n}(x;q),~A_{1}(x;q)=1.
\end{equation}

In the following,
we always write permutation by its standard cycle form,
in which each cycle has its smallest
element first and the cycles are written in increasing order of their first elements.
The number of {\it cycle descents} of a permutation is the number of pairs $(a,b)$ where $a$ is the element just before $b$ in its cycle and $a>b$.
Let $\cdes(\pi)$ be the number of cycle descents of $\pi$. For example, $\cdes((1,\mathbf{4},2)(3,5,7)(6,\mathbf{9},8))=2$.
It is clear that
$\exc(\pi)+\cdes(\pi)+\cyc(\pi)=n$ for $\pi\in\msn$.
We can now present a generalization of Theorem~\ref{Eulerian-tree}.
\begin{theorem}\label{Anxyq}
Let $G=\{x\rightarrow y, y\rightarrow py\}$.
For any $n\geqslant 1$, one has
$$(xD_{G})^n|_{D_{G}=q}=\sum_{\pi\in\msn}x^{n-\exc(\pi)}y^{\exc(\pi)}p^{\cdes(\pi)}q^{\cyc(\pi)}.$$
When $p=1$, it reduces to $(xD_{G})^n|_{p=1,D_{G}=q}=A_n(x,y,q)$.
\end{theorem}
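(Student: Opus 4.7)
The plan is to prove the identity by induction on $n$, showing that both sides satisfy the same differential recurrence with matching initial condition. Put
$$F_n(x,y,p,q) := (xD_G)^n|_{D_G=q} \quad\text{and}\quad B_n(x,y,p,q) := \sum_{\pi\in\msn} x^{n-\exc(\pi)}\, y^{\exc(\pi)}\, p^{\cdes(\pi)}\, q^{\cyc(\pi)}.$$
The base case $n=1$ is immediate: $\msn$ with $n=1$ consists only of the identity, with $\exc=\cdes=0$ and $\cyc=1$, so $B_1=xq=(xD_G)|_{D_G=q}$.

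For the grammatical side, I would expand $(xD_G)^n=\sum_k g_{n,k}(x,y,p)\,D_G^k$ in normal-ordered form and then apply $xD_G$ on the left. Using $D_G(x)=y$ and $D_G(y)=py$ (with $p$ treated as a scalar), a direct computation via the Leibniz rule, followed by the substitution $D_G=q$, yields
$$F_{n+1} = xy\,\partial_x F_n + xyp\,\partial_y F_n + xq\,F_n.$$

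For the permutation side, I would derive a recurrence for $B_n$ by inserting the element $n+1$ into $\pi\in\msn$ written in standard cycle form. The $n+1$ slots fall into four classes: opening a new singleton cycle at the end (contributing $xq\,B_n$); appending $n+1$ as the last element of an existing cycle ($\cyc(\pi)$ slots, each raising $\exc$ by $1$ and leaving $\cdes$ fixed, because the wrap-around pair is excluded from $\cdes$); inserting right after a cycle descent ($\cdes(\pi)$ slots, each raising $\exc$ by $1$ while keeping $\cdes$ unchanged, since the lost descent $(a_i,a_{i+1})$ is replaced by the new descent $(n+1,a_{i+1})$); and inserting right after a cycle ascent, which corresponds to an excedance of $\pi$ ($\exc(\pi)$ slots, each raising $\cdes$ by $1$ and leaving $\exc$ fixed). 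Translating these four contributions into differential operators gives
$$B_{n+1} = xq\,B_n + yq\,\partial_q B_n + yp\,\partial_p B_n + xyp\,\partial_y B_n.$$

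The two recurrences are reconciled via the statistic identity $\exc(\pi)+\cdes(\pi)+\cyc(\pi)=n$, which holds cycle by cycle: in standard form, a cycle of length $k$ contributes $k-1$ to $\exc+\cdes$ and $1$ to $\cyc$. Since every monomial of $B_n$ therefore has $x$-degree $n-\exc=\cdes+\cyc$, this upgrades to the homogeneity relation $x\,\partial_x B_n = p\,\partial_p B_n + q\,\partial_q B_n$; multiplying by $y$ converts the two middle terms of the $B_n$-recurrence into $xy\,\partial_x B_n$, recovering exactly the $F_n$-recurrence. Induction then closes, and the $p=1$ reduction follows immediately since $B_n|_{p=1}=\sum_\pi x^{n-\exc}y^{\exc}q^{\cyc}$ is precisely $A_n(x,y,q)$ by Theorem~\ref{Eulerian-tree}. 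The main obstacle is the careful case analysis on the insertion side; the key subtlety is that the wrap-around pair of each cycle is not counted in $\cdes$, which is what keeps the bookkeeping between cases (ii) and (iii) clean and produces a recurrence that harmonizes with the grammatical one after invoking the homogeneity relation.
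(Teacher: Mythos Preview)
Your argument is correct and follows essentially the same route as the paper: both proofs proceed by induction on $n$ and analyze the insertion of $n+1$ into a permutation written in standard cycle form. The paper works at the level of the coefficients $A_{n,k,\ell}(p)$ and uses a three-way split (insert after a drop/fixed point, after an excedance, or as a new fixed point), which directly matches the recurrence~\eqref{Anki-recu02}; you instead work with the generating polynomials and a four-way split (separating ``after a drop/fixed point'' into ``after the last element of a cycle'' and ``after a cycle descent''), which then requires the homogeneity identity $x\partial_x B_n = p\partial_p B_n + q\partial_q B_n$ to collapse back to the grammatical recurrence. This is a purely presentational difference.

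One small imprecision: for the $p=1$ reduction you invoke Theorem~\ref{Eulerian-tree} to identify $\sum_\pi x^{n-\exc}y^{\exc}q^{\cyc}$ with $A_n(x,y,q)$, but that theorem gives a \emph{forest} interpretation of $A_n(x,y,z)$, not a permutation one. The cleanest justification is on the grammar side: at $p=1$ the rule $y\to py$ becomes $y\to y$, so $G$ reduces to the grammar $G'$ of Theorem~\ref{Eulerian-tree}, whence $(xD_G)^n|_{p=1,\,D_G=q}=(xD_{G'})^n|_{D_{G'}=q}=A_n(x,y,q)$ by definition.
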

\begin{proof}
The first few $(xD_{G})^n$ are listed as follows:
\begin{align*}
(xD_{G})^2&=xyD_{G}+x^2D_{G}^2,~
(xD_{G})^3=(xy^2+px^2y)D_{G}+3x^2yD_{G}^2+x^3D_{G}^3,\\
(xD_{G})^4&=(xy^3+4px^2y^2+p^2x^3y)D_{G}+(7x^2y^2+4px^3y)D_{G}^2+6x^3yD_{G}^3+x^4D_{G}^4.
\end{align*}
Assume the following expansion holds for $n$:
\begin{equation}\label{aDGthm02}
(xD_{G})^n=\sum_{k=1}^n\sum_{\ell=k}^nA_{n,k,\ell}(p)x^\ell y^{n-\ell}D_{G}^k.
\end{equation}
Clearly, $A_{1,1,1}(p)=1$ and $A_{1,k,\ell}(p)=0$ if $(k,\ell)\neq (1,1)$.
Since $$(xD_{G})^{n+1}=xD_{G}\left(xD_{G}\right)^n=xD_{G}\left(\sum_{k=1}^n\sum_{\ell=k}^nA_{n,k,\ell}(p)x^\ell y^{n-\ell}D_{G}^k\right),$$
it follows that
\begin{equation}\label{Anki-recu02}
A_{n+1,k,\ell}(p)=\ell A_{n,k,\ell}(p)+(n-\ell+1)pA_{n,k,\ell-1}(p)+A_{n,k-1,\ell-1}(p).
\end{equation}
which implies that~\eqref{aDGthm02} holds for $n+1$. We claim that
\begin{equation}\label{Ankell}
A_{n,k,\ell}(p)=\sum_{\substack{\pi\in\msn\\ \exc(\pi)=n-\ell\\\cyc(\pi)=k}}p^{\cdes(\pi)}.
\end{equation}
Given a $\pi'\in \ms_{n+1}$.
Suppose $\exc(\pi')=n+1-\ell$ and $\cyc(\pi')=k$.
In order to get $\pi'$ from $\pi\in\ms_{n}$ by inserting the entry $n+1$, there are three ways:
\begin{itemize}
  \item [$(i)$] If $\exc(\pi)=n-\ell$ and $\cyc(\pi)=k$, we can insert $n+1$ right after a drop (i.e., the index $i$ such that $i>\pi(i)$) or a fixed point.
  Note that there are $\ell$ choices for the position of $n+1$. The first term of the right-hand side of~\eqref{Anki-recu02} is explained.
  \item [$(ii)$] If $\exc(\pi)=n+1-\ell$ and $\cyc(\pi)=k$, we can insert $n+1$ right after an excedance. This means we have $n+1-\ell$ choices for the position of $n+1$. Note that the number of cycle descents will increase $1$. The second term in the right hand side of~\eqref{Anki-recu02} is explained.
  \item [$(iii)$] If $\exc(\pi)=n+1-\ell$ and $\cyc(\pi)=k-1$, we can insert $n+1$ right after $\pi$ as a fixed point. The last term in the right hand side is explained.
\end{itemize}
This completes the proof of~\eqref{Ankell}.
\end{proof}

As a variant of Theorem~\ref{Eulerian-tree}, we now present the following result.
\begin{theorem}\label{Eulerian-fulltree}
Let $G''=\{x\rightarrow 1, y\rightarrow 1\}$.
For any $n\geqslant 1$, we have
\begin{equation}\label{full-xy}
(xyD_{G''})^n=\sum_{k=1}^n\sum_{\ell=k}^na_{n,k,\ell}x^\ell y^{n+k-\ell}D_{G''}^k,
\end{equation}
where the coefficients $a_{n,k,\ell}$ satisfy the recurrence relation
\begin{equation}\label{anki-recu}
a_{n+1,k,\ell}=\ell a_{n,k,\ell}+(n+k-\ell+1)a_{n,k,\ell-1}+a_{n,k-1,\ell-1},
\end{equation}
with the initial conditions $a_{1,1,1}=1$ and $a_{1,k,\ell}=0$ if $(k,\ell)\neq (1,1)$.
The coefficient $a_{n,k,\ell}$ counts full binary $k$-forests on $[n]$ with $\ell$ left leaves.
Moreover, we have
\begin{equation}\label{xyG3}
(xyD_{G''})^n=\sum_{k=1}^n\sum_{\ell=k}^{\lrf{(n+k)/2}}\gamma(n,k,\ell)(xy)^\ell (x+y)^{n+k-2\ell}D_{G''}^k,
\end{equation}
where the coefficients $\gamma(n,k,\ell)$ satisfy the recursion
\begin{equation}\label{xyGmma}
\gamma(n+1,k,\ell)=\ell\gamma(n,k,\ell)+2(n+k-2\ell+2)\gamma(n,k,\ell-1)+\gamma(n,k-1,\ell-1),
\end{equation}
with the initial conditions $\gamma(1,1,1)=1$ and $\gamma(1,k,\ell)=0$ for all $(k,\ell)\neq (1,1)$.
\end{theorem}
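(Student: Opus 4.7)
The plan is to prove the three assertions by induction on $n$, closely paralleling the argument for Theorem~\ref{Eulerian-tree} but with the grammar $G''=\{x\to 1,\,y\to 1\}$ and the operator $xyD_{G''}$ replacing $G'$ and $xD_{G'}$. Assuming~\eqref{full-xy} holds for $n$, I apply $xyD_{G''}$ termwise to the right-hand side. Since $D_{G''}(x^\ell y^{n+k-\ell})=\ell x^{\ell-1}y^{n+k-\ell}+(n+k-\ell)x^\ell y^{n+k-\ell-1}$ and the derivation rule gives $D_{G''}(fD_{G''}^k)=D_{G''}(f)D_{G''}^k+fD_{G''}^{k+1}$, multiplication by $xy$ produces exactly three families of monomials, namely $x^\ell y^{n+k-\ell+1}D_{G''}^k$, $x^{\ell+1}y^{n+k-\ell}D_{G''}^k$, and $x^{\ell+1}y^{n+k-\ell+1}D_{G''}^{k+1}$. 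Extracting the coefficient of $x^\ell y^{n+1+k-\ell}D_{G''}^k$ in $(xyD_{G''})^{n+1}$ then immediately yields~\eqref{anki-recu}, and the base case $n=1$ is direct.

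For the combinatorial interpretation, I weight each full binary $k$-forest on $[n]$ by a factor $D_{G''}^k$ (one for each component) together with $x$ at every left leaf and $y$ at every right leaf; since each of the $k$ components has one more leaf than internal node, the total number of leaves is $n+k$ and a forest with $\ell$ left leaves has weight $x^\ell y^{n+k-\ell}D_{G''}^k$. To build a forest on $[n+1]$, the new vertex $n+1$ can be grafted onto one of the $\ell$ existing left leaves (promoting it to an internal node with new left and right children of weights $x$ and $y$), grafted onto one of the $n+k-\ell$ existing right leaves analogously, or introduced as the root of a new rightmost component carrying two leaf children of weights $x$ and $y$. The resulting weight shifts match the three terms of~\eqref{anki-recu} term by term, so $a_{n,k,\ell}$ does count the forests in question.

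For~\eqref{xyG3} and~\eqref{xyGmma} I induct on $n$ again, using the key Leibniz expansion
\[
xyD_{G''}\bigl((xy)^\ell(x+y)^{n+k-2\ell}\bigr)=\ell(xy)^\ell(x+y)^{n+k-2\ell+1}+2(n+k-2\ell)(xy)^{\ell+1}(x+y)^{n+k-2\ell-1},
\]
which follows from $D_{G''}((xy)^\ell)=\ell(xy)^{\ell-1}(x+y)$ and $D_{G''}((x+y)^m)=2m(x+y)^{m-1}$. Adding the extra $(xy)^{\ell+1}(x+y)^{n+k-2\ell}D_{G''}^{k+1}$ term supplied by the product rule and matching the coefficient of $(xy)^\ell(x+y)^{n+1+k-2\ell}D_{G''}^k$ produces~\eqref{xyGmma}. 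The stated range $k\le\ell\le\lfloor(n+k)/2\rfloor$ is preserved inductively: the upper bound is forced by the nonnegativity of the exponent $n+k-2\ell$, while the lower bound $\ell\ge k$ is preserved because the only term on the right that lowers $k$ by one also lowers $\ell$ by one. The main bookkeeping obstacle will be confirming consistency at the two edge values, namely $\ell=k$ (where~\eqref{xyGmma} collapses to $\gamma(n+1,k,k)=k\gamma(n,k,k)+\gamma(n,k-1,k-1)$, consistent with $\gamma(n,k,k)=\Stirling{n}{k}$) and $\ell=\lfloor(n+k)/2\rfloor$ (where the middle term must vanish once $n+k$ is even).
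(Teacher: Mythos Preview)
Your proposal is correct and follows essentially the same route as the paper. Parts (A) and (B) of the paper's proof coincide with your first two paragraphs almost verbatim. For part (C), the paper packages your Leibniz computation as a ``change of grammar'': setting $u=xy$ and $v=x+y$ gives $D_{G''}(u)=v$ and $D_{G''}(v)=2$, so $(xyD_{G''})^n=(uD_{G'''})^n$ with $G'''=\{u\to v,\,v\to 2\}$, and the recurrence~\eqref{xyGmma} is then read off by induction. This is exactly your computation $D_{G''}((xy)^\ell)=\ell(xy)^{\ell-1}(x+y)$, $D_{G''}((x+y)^m)=2m(x+y)^{m-1}$ in different clothing; the paper's substitution just names the symmetric functions and suppresses the boundary checks you spell out.
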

\begin{proof}
(A) The first few $(xyD_{G''})^n$ are given as follows:
\begin{align*}
(xyD_{G''})^2&=(xy^2+x^2y)D_{G''}+x^2y^2D_{G''}^2,\\
(xyD_{G''})^3&=(xy^3+4x^2y^2+x^3y)D_{G''}+(3x^2y^3+3x^3y^2)D_{G''}^2+x^3y^3D_{G''}^3,\\
(xyD_{G''})^4&=(xy^4+11x^2y^3+11x^3y^2+x^4y)D_{G''}+(7x^2y^4+22x^3y^3+7x^4y^2)D_{G''}^2+\\
&(6x^3y^4+6x^4y^3)D_{G''}^3+x^4y^4D_{G''}^4.
\end{align*}
Thus~\eqref{full-xy} holds for $n\leqslant 4$. Assume that the expansion holds for $n$.
Then we have
\begin{align*}
&(xyD_{G''})^{n+1}\\
&=xyD_{G''}\left(\sum_{k=1}^n\sum_{\ell=k}^na_{n,k,\ell}x^\ell y^{n+k-\ell}D_{G''}^k\right)\\
&=\sum_{k=1}^n\sum_{\ell=k}^na_{n,k,\ell}\left[\left(\ell x^\ell y^{n+k-\ell+1}+(n+k-\ell)x^{\ell+1}y^{n+k-\ell}\right)D_{G''}^k+x^{\ell+1}y^{n+k-\ell+1}D_{G''}^{k+1}\right].
\end{align*}
Extracting the coefficient of $x^\ell y^{n+k-\ell+1}D_{G''}^k$ on both sides leads to the recursion~\eqref{anki-recu}.

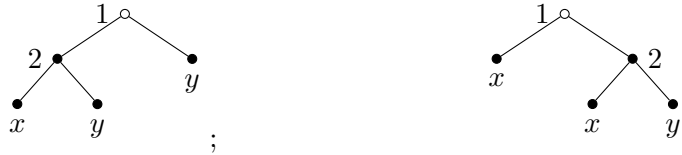
\begin{figure}[ht!]
\begin{center}
\hspace*{\stretch{1}}
\begin{tikzpicture}
\Tree [.\node[label=left:{1}]{};
            \edge; [.\node[label=left:{2}]{};
                \edge; [.\node [label=below:{$x$}] {}; ]
                \edge; [.\node [label=below:{$y$}] {}; ]]
            \edge; [.\node [label=below:{$y$}] {}; ]]
        ]
\end{tikzpicture};\hspace*{\stretch{1}}
\begin{tikzpicture}
\Tree [.\node[label=left:{1}]{};
        	\edge; [.\node [label=below:{$x$}] {}; ]
            \edge; [.\node[label=right:{2}]{};
                \edge; [.\node [label=below:{$x$}] {}; ]
                \edge; [.\node [label=below:{$y$}] {}; ]]]
        ]
\end{tikzpicture}\hspace*{\stretch{1}}
\end{center}
\caption{The planted full binary
increasing plane trees on $[2]$ encoded by $xy^2{D_{G''}}$ and $x^2yD_{G''}$, respectively .}
\label{Fig03-xy}
\end{figure}

(B) Let $F$ be a full binary $k$-forest.
We first give a labeling of $F$ as follows. Label each planted full binary increasing plane tree by $D_{G''}$, a left leaf by $x$ and a right leaf by $y$.
The weight of $F$ is defined to be the product of the labels of all trees in $F$. See Figure~\ref{Fig03-xy} for illustrations. Assume that the weight of $F$ is $x^\ell y^{n+k-\ell}D_{G''}^k$.
Let us examine how to generate a forest $F'$ on $[n+1]$ by adding the vertex $n+1$ to $F$.
We have the following three possibilities:
\begin{itemize}
  \item [$c_1$:] When the vertex $n+1$ is attached to a leaf with label $x$, then $n+1$ becomes a internal node with two children.
  The weight of $F'$ is $x^{\ell}y^{n+k-\ell+1}D_{G''}^{k}$;
  \item [$c_2$:] When the vertex $n+1$ is attached to a leaf with label $y$, then $n+1$ becomes a internal node with two children.
  The weight of $F'$ is $x^{\ell+1}y^{n+k-\ell}D_{G''}^{k}$;
  \item [$c_3$:] If the vertex $n+1$ is added as a new root, then $F'$ becomes a full binary $(k+1)$-forest, the left child of $n+1$ has a label $x$,
  while the right child of $n+1$ has a label $y$.
  The weight of $F'$ is given by $x^{\ell+1}y^{n+k-\ell+1}D_{G''}^{k+1}$.
\end{itemize}
The above three cases exhaust all the possibilities. Thus $(xyD_{G''})^{n+1}$
equals the sum of the weights of all full binary $k$-forests on $[n+1]$, where $1\leqslant k\leqslant n+1$.

(C) We now consider a change of the grammar $G''$. Setting $u=xy$ and $v=x+y$, we get
$$D_{G''}(u)=D_{G''}(xy)=v,~D_{G''}(v)=D_{G''}(x+y)=2.$$
Let $G'''=\{u\rightarrow v,~v\rightarrow 2\}$. Then we have
$\left(xyD_{G''}\right)^n=\left(uD_{G'''}\right)^n$.
Note that
$$\left(uD_{G'''}\right)^2=uvD_{G'''}+u^2D_{G'''}^2,~\left(uD_{G'''}\right)^3=(uv^2+2u^2)D_{G'''}+3u^2vD_{G'''}^2+u^3D_{G'''}^3.$$
By induction, it is easy to check that
\begin{equation*}
\left(uD_{G'''}\right)^n=\sum_{k=1}^n\sum_{\ell=k}^{\lrf{(n+k)/2}}\gamma(n,k,\ell)u^\ell v^{n+k-2\ell}D_{G'''}^k,
\end{equation*}
where the coefficients $\gamma(n,k,\ell)$ satisfy the recursion~\ref{xyGmma}.
Then upon taking $u=xy$ and $v=x+y$, we get~\eqref{xyG3}.
This completes the proof.
\end{proof}

Comparing~\eqref{anki-recu} with~\eqref{Eulerian02}, we notice that $a_{n,1,\ell}=\Eulerian{n}{\ell}$.
Define $$a_n(x,y,z)=\sum_{k=1}^n\sum_{\ell=k}^na_{n,k,\ell}x^\ell y^{n+k-\ell}z^k,~a_0(x,y,z)=1.$$
Multiplying both sides of~\eqref{anki-recu} by $x^\ell y^{n+k-\ell+1}z^k$ and summing over all $\ell$ and $k$, we obtain
$$a_{n+1}(x,y,z)=x(n+yz)a_n(x,y,z)+x(y-x)\frac{\partial}{\partial x}a_n(x,y,z)+xz\frac{\partial}{\partial z}a_n(x,y,z).$$
In particular,
$$a_{n+1}(1,1,z)=(n+z)a_n(1,1,z)+z\frac{\mathrm{d}}{\mathrm{d} z}a_n(1,1,z),~a_0(1,1,z)=1.$$
Let $a_n(1,1,z)=\sum_{k=1}^nL(n,k)z^k$. It follows that
$L(n+1,k)=(n+k)L(n,k)+L(n,k-1)$,
from which we notice that $L(n,k)$ is the (signless) {\it Lah number}, see~\cite{Engbers19} for instance. Explicitly, $$L(n,k)=\binom{n-1}{k-1}\frac{n!}{k!}.$$
\begin{corollary}
For $n\geqslant 1$,
we have $$a_n(1,1,z)=\sum_{k=1}^n\binom{n-1}{k-1}\frac{n!}{k!}z^k.$$
\end{corollary}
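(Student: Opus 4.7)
The plan is to verify the explicit formula by induction on $n$, starting from the two-index recurrence $L(n+1,k)=(n+k)L(n,k)+L(n,k-1)$ already extracted from the differential recurrence for $a_n(1,1,z)$.

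First I would pin down the base case and boundary conditions directly from the text. Specializing $a_{n+1}(1,1,z)=(n+z)a_n(1,1,z)+z\frac{\mathrm{d}}{\mathrm{d} z}a_n(1,1,z)$ at $n=0$ with $a_0(1,1,z)=1$ gives $a_1(1,1,z)=z$, so $L(1,1)=1$, which matches $\binom{0}{0}\,1!/1!=1$. The natural convention $L(n,k)=0$ for $k\notin\{1,\ldots,n\}$ (forced by the degree bound in the definition of $a_n(1,1,z)$) agrees with the proposed formula at the boundary.

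For the inductive step, assume $L(n,j)=\binom{n-1}{j-1}\,n!/j!$ for every $j$. Substituting into the right-hand side of the recurrence and factoring $n!/k!$ out of both summands reduces the goal $L(n+1,k)=\binom{n}{k-1}(n+1)!/k!$ to the purely algebraic identity
$$(n+k)\binom{n-1}{k-1}+k\binom{n-1}{k-2}=(n+1)\binom{n}{k-1}.$$
Placing both binomials over the common denominator $(k-1)!(n-k+1)!$ collapses the bracketed numerator to $(n-1)!\,[(n+k)(n-k+1)+k(k-1)]=(n-1)!\,(n^2+n)=(n+1)!$, so the left side equals $(n+1)!/[(k-1)!(n-k+1)!]=(n+1)\binom{n}{k-1}$, completing the induction.

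No substantial obstacle is anticipated. The recurrence $L(n+1,k)=(n+k)L(n,k)+L(n,k-1)$ is the defining one for the signless Lah numbers, so one could equivalently quote the closed form from the literature such as~\cite{Engbers19}; the verification above simply makes the identification explicit and keeps the paper self-contained.
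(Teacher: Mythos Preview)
Your proposal is correct and follows the same route as the paper: both start from the specialized recurrence $L(n+1,k)=(n+k)L(n,k)+L(n,k-1)$ obtained from the differential relation for $a_n(1,1,z)$ and identify $L(n,k)$ with the signless Lah numbers. The paper simply recognizes the recurrence and quotes the closed form via~\cite{Engbers19}, whereas you supply the explicit inductive verification of the binomial identity; this is exactly the self-contained alternative you mention in your final paragraph.
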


A partition of $[n]$ into {\it lists} is a set
partition of $[n]$ for which the elements of each block are linearly ordered.
It is well known that $L(n,k)$ counts set partitions of $[n]$ into $k$ lists (see~\cite[A008297]{Sloane}).
We always assume that each list is prepended and appended by $0$. Given a list $\sigma_1\sigma_2\cdots\sigma_i$.
We identify it with the word $0\sigma_1\sigma_2\cdots\sigma_i0$.
We say that an index $p\in \{0,1,2,\ldots,i-1\}$ is an {\it ascent} if $\sigma_p<\sigma_{p+1}$, and $q\in \{1,2,\ldots,i\}$ is a {\it descent} if $\sigma_p>\sigma_{p+1}$, where we set $\sigma_0=\sigma_{i+1}=0$.
Let $F$ be a full binary $k$-forest. Following~\cite[p.~51]{Stanley11}, a bijection from full binary $k$-forests to set partitions with $k$ lists can be given as follows: Read the internal vertices of trees (from left to right) of $F$ in symmetric order, i.e.,
read the labels of the left subtree (in symmetric order, recursively),
then the label of the root, and then the labels of the right subtree. Using this correspondence, we get the following result.
\begin{corollary}
Let $a_{n,k,\ell}$ be defined by~\eqref{full-xy}. Then $a_{n,k,\ell}$ is the number of set partitions of $[n]$ into $k$ lists with $\ell$ ascents and $n+k-\ell$ descents.
\end{corollary}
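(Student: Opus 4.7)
The plan is to invoke the bijection sketched just before the statement---full binary $k$-forests on $[n]$ are sent to set partitions of $[n]$ into $k$ lists by reading the internal vertices of each tree in symmetric order---and to verify that under this bijection left leaves correspond to ascents and right leaves correspond to descents of the padded lists. Combining this with part (B) of Theorem~\ref{Eulerian-fulltree}, which identifies $a_{n,k,\ell}$ with the number of full binary $k$-forests on $[n]$ having $\ell$ left leaves, will then yield the claimed interpretation.

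First I would fix a planted full binary increasing plane tree $T$ with $i$ internal vertices (and hence $i+1$ leaves) and let $\sigma_1\sigma_2\cdots\sigma_i$ be its list of internal labels in symmetric order. Extending the symmetric-order traversal so that leaves are also visited, one checks that the $i+1$ leaves fall one per gap into the $i+1$ gaps of the padded word $0\sigma_1\cdots\sigma_i 0$. The two boundary gaps are handled at once: the leftmost leaf of $T$ is reached by descending left-child edges from the root, hence is a left child sitting in the gap $(0,\sigma_1)$, an ascent; symmetrically the rightmost leaf is a right child in the gap $(\sigma_i,0)$, a descent.

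The key step is the case analysis at an interior gap $(\sigma_j,\sigma_{j+1})$, split according to whether the right child of $\sigma_j$ is internal or a leaf. If it is internal, then $\sigma_{j+1}$ must be the leftmost internal vertex of $\sigma_j$'s right subtree, which forces $\sigma_{j+1}$'s left child to be a leaf; this is the unique leaf visited between $\sigma_j$ and $\sigma_{j+1}$ in the extended traversal, and it is a left leaf. Since $\sigma_{j+1}$ is then a descendant of $\sigma_j$, the increasing property gives $\sigma_j<\sigma_{j+1}$, an ascent. If instead $\sigma_j$'s right child is a leaf, then that leaf is itself the one between $\sigma_j$ and $\sigma_{j+1}$, hence a right leaf, and $\sigma_{j+1}$ must be the nearest ancestor of $\sigma_j$ containing it in its left subtree, so by the increasing property $\sigma_j>\sigma_{j+1}$, a descent.

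The main obstacle is essentially this structural unpacking of symmetric-order adjacency---matching each leaf's type to the direction of the gap it occupies---but it is forced by the definitions once one sets up the bookkeeping carefully. With the leaf/gap dictionary in place, summing over the $k$ trees of a full binary $k$-forest shows that a forest with $\ell$ left leaves maps to a set partition into $k$ lists with exactly $\ell$ ascents and $n+k-\ell$ descents, yielding the corollary.
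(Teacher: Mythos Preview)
Your proposal is correct and follows exactly the route the paper intends: the paper simply invokes the symmetric-order bijection from full binary $k$-forests to set partitions into $k$ lists and asserts the corollary, whereas you spell out the leaf/gap dictionary (left leaf $\leftrightarrow$ ascent, right leaf $\leftrightarrow$ descent) that the paper leaves implicit. The case analysis you give at an interior gap---splitting on whether $\sigma_j$'s right child is internal or a leaf---is the standard verification and is sound.
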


For a permutation $\pi\in\msn$ with $\pi(0)=\pi(n+1)=0$, we say that the entry $\pi(i)$
\begin{itemize}
  \item is a {\it valley} if $\pi(i-1)>\pi(i)<\pi(i+1)$;
  \item is a {\it double descent} if $\pi(i-1)>\pi(i)>\pi(i+1)$.
\end{itemize}
Let $\val(\pi)$ (resp.~$\dd(\pi)$) denote the number of valleys (resp.~double descents) in $\pi$.
Define $$\gamma(n,\ell)=\#\{\pi\in\msn: \val(\pi)=\ell,~\dd(\pi)=0\}.$$
A classical result of Foata-Sch\"utzenberger~\cite{Foata73} states that the Eulerian polynomials
have the following $\gamma$-expansion:
$$A_n(x)=x\sum_{\ell=0}^{\lrf{(n-1)/2}}\gamma(n,\ell)x^\ell(1+x)^{n-1-2\ell}.$$
Br\"and\'{e}n~\cite{Branden08} reproved this expansion by introducing the modified Foata-Strehl action.
Let $\mathcal{S}(n,k)$ be the set of partitions of $[n]$ into $k$ lists.
Applying the modified Foata-Strehl action on each list of an element in $\mathcal{S}(n,k)$, we find the following result, and omit the proof for simplicity.
\begin{corollary}
For $n\geqslant 1$, the polynomials $a_n(x,y,z)$ is partial $\gamma$-positive, i.e.,
\begin{align*}
\sum_{k=1}^n\sum_{\ell=k}^na_{n,k,\ell}x^\ell y^{n+k-\ell}z^k&=\sum_{k=1}^nz^k\sum_{\ell=k}^{\lrf{(n+k)/2}}\gamma(n,k,\ell)(xy)^\ell (x+y)^{n+k-2\ell}\\
&=\sum_{k=1}^n(xyz)^k\sum_{i=0}^{\lrf{(n-k)/2}}\gamma(n,k,k+i)(xy)^i (x+y)^{n-k-2i},
\end{align*}
where $\gamma(n,k,k+i)$ counts partitions of $[n]$ into $k$ lists with $i$ valleys and with no double descents.
\end{corollary}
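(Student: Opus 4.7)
The plan is to apply the modified Foata--Strehl (MFS) action of Br\"and\'en~\cite{Branden08} list-by-list to the set-partition model from the preceding corollary, grouping terms into orbits whose generating functions collapse to the claimed $\gamma$-positive form; since the two right-hand sides of the identity differ only by the reindexing $\ell=k+i$, it suffices to prove either of them.

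By the preceding corollary, $a_{n,k,\ell}$ enumerates set partitions $F\in\mathcal{S}(n,k)$ with exactly $\ell$ ascents (hence $n+k-\ell$ descents) when each list is bordered by a leading and trailing $0$. Fix such an $F$ with lists $L_1,\ldots,L_k$. For each $L=0\sigma_1\cdots\sigma_i0$, classify every interior entry as a peak, valley, double ascent, or double descent using the definitions preceding the corollary. Since $\sigma_0=0<\sigma_1$ and $\sigma_i>0=\sigma_{i+1}$, the first inter-entry step is always an ascent and the last always a descent, so the alternating pattern of ascent-runs and descent-runs forces each list to have exactly one more peak than valley. Writing $v(L),\,da(L),\,dd(L)$ for the counts of valleys, double ascents, and double descents in $L$, this gives $\asc(L)=v(L)+1+da(L)$ and $\des(L)=v(L)+1+dd(L)$; summing over the $k$ lists yields
\[\asc(F)=k+V(F)+DA(F),\qquad \des(F)=k+V(F)+DD(F),\]
where $V,DA,DD$ are the cumulative counts. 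Since $\asc(F)+\des(F)=n+k$, we obtain $DA(F)+DD(F)=n-k-2V(F)$.

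Next I apply the MFS involutions inside each list: for each $m\in[n]$, define $\phi_m$ to fix $m$ when $m$ is a peak or valley in its list, and otherwise to swap $m$ between double ascent and double descent via the standard Br\"and\'en move, carried out entirely within the list containing $m$. These involutions commute pairwise and generate a $\mathbb{Z}_2^n$-action on $\mathcal{S}(n,k)$ that preserves $k$ and $V(F)$ while independently toggling the ascent-run/descent-run status of each of the $DA+DD$ non-extremal interior entries. Each orbit thus has size $2^{DA+DD}$ and contains a unique representative with no double descents. Summing $x^{\asc(F')}y^{\des(F')}$ over a single orbit factors as
\[(xy)^{k+V}\sum_{DA'+DD'=DA+DD}x^{DA'}y^{DD'}=(xy)^{k+V}(x+y)^{n-k-2V},\]
and indexing orbits by their canonical (double-descent-free) representatives partitions them by $k$ and by $i=V$, producing exactly $\gamma(n,k,k+i)$ orbits per pair. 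Summing over all orbits then gives the stated identity.

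The one delicate point is to verify that the MFS involutions, originally defined for permutations of $[n]$ with the boundary convention $\pi(0)=\pi(n+1)=0$, transfer verbatim to individual lists bordered by zeros: this is immediate since each bordered list is combinatorially a permutation of its own entry set under the same boundary convention, and $\phi_m$ moves only entries within the single list containing $m$, so both the set-partition structure and the per-list decomposition of the statistics are preserved.
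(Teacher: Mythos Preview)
Your proof is correct and follows exactly the approach the paper indicates: the paper states (and then omits the details of) the argument ``applying the modified Foata--Strehl action on each list of an element in $\mathcal{S}(n,k)$,'' and you have carried this out carefully. Your bookkeeping $\asc(F)=k+V(F)+DA(F)$, $\des(F)=k+V(F)+DD(F)$, the orbit size $2^{DA+DD}=2^{\,n-k-2V}$, and the choice of the double-descent-free representative are all in order, and your remark that each bordered list is itself a $0$-bordered permutation so that the Br\"and\'en involutions transfer verbatim is exactly the point one needs to justify.
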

\section{Normal ordered grammars associated with second-order Eulerian polynomials}\label{section03}
Following Carlitz~\cite{Carlitz65}, the {\it second-order Eulerian polynomials} $C_n(x)$ are defined by
$$\sum_{k=0}^\infty \Stirling{n+k}{k}x^k=\frac{C_n(x)}{(1-x)^{2n+1}},$$
which have been well studied in recent years, see~\cite{Carlitz65,Chen17,Dzhumadil14,Elizalde,Haglund12,Ma23}.

For $\mathbf{m}=(m_1,m_2,\ldots,m_n)\in \mathbb{N}^n$, let $\mathbf{n}=\{1^{m_1},2^{m_2},\ldots,n^{m_n}\}$ be a multiset,
where $i$ appears $m_i$ times.
We say that a multipermutation $\sigma$ of $\mathbf{n}$ is {\it Stirling permutation} if $\sigma_s\geqslant\sigma_i$ as soon as $\sigma_i=\sigma_j$ and $i<s<j$.
Denote by $\mqn$ the set of Stirling permutations of $\{1^{2},2^{2},\ldots,n^{2}\}$.
Let $\sigma=\sigma_1\sigma_2\cdots\sigma_{2n}\in\mqn$. In the following discussion, we always set $\sigma_0=\sigma_{2n+1}=0$.
For $0\leqslant i\leqslant 2n$, we say that an index $i$ is a {\it descent} (resp.~{\it ascent}, {\it plateau}) of $\sigma$ if
$\sigma_{i}>\sigma_{i+1}$ (resp. $\sigma_{i}<\sigma_{i+1}$, $\sigma_{i}=\sigma_{i+1}$).
Let $\des(\sigma),\asc(\sigma)$ and $\plat(\sigma)$ be the number of descents, ascents and plateaus of $\sigma$, respectively.
It is now well known that descents, ascents and plateaus have the same distribution over $\mqn$, and their common enumerative polynomials are the second-order Eulerian polynomials $C_n(x)$.
As a variant of~\cite[Theorem~2.3]{Chen17},
the grammatical description of $C_n(x)$ can be restated as follows:
\begin{equation}\label{xyCn}
(xD_{G})^n(x)=y^{2n+1}C_n\left(\frac{x}{y}\right),~\text{where $G=\{x\rightarrow y^2,~y\rightarrow y^2\}$.}
\end{equation}

We say that $T$ is
a {\it planted ternary (resp.~full ternary) increasing plane tree} on $[n]$ if it is a ternary tree with $2n-1$ (resp.~$2n+1$) unlabeled leaves and $n$ labeled internal vertices, and satisfying the following conditions (see~Figures~\ref{Fig01-ternary} and~\ref{Fig05-xyz}, where we give each leaf a weight):
\begin{itemize}
  \item [$(i)$] Internal vertices are labeled by $1,2,\ldots,n$. The node labelled $1$ is distinguished as the root and it has only one child (resp.~it also has three children);
 \item [$(ii)$] Excluding (resp.~Including) the root, each internal node has exactly three ordered children, which are referred to as a left child, a middle child and a right child;
  \item [$(iii)$] For each $2\leqslant i\leqslant n$, the labels of the internal nodes in the unique
path from the root to the internal node labelled $i$ form an increasing sequence.
\end{itemize}
We say that $F$ is a {\it ternary (resp.~full ternary) $k$-forest} on $[n]$ if it has $k$ connected components, each component is a
planted ternary (resp.~full ternary) increasing plane tree, the labels of the roots are increasing from left to right and the labels of the $k$-forest form a partition of $[n]$.

\begin{figure}[ht!]
\begin{center}
\hspace*{\stretch{1}}
\begin{tikzpicture}
\Tree [.\node[label={1}]{};
        \edge; [.\node[label=left:{2}]{};
            \edge; [.\node[label=left:{3}]{};
                \edge; [.\node [label=below:{$x$}] {}; ]
                   \edge; [.\node [label=below:{$y$}] {}; ]
                \edge; [.\node [label=below:{$y$}] {}; ]]
                    \edge; [.\node [label=below:{$y$}] {}; ]
            \edge; [.\node [label=below:{$y$}] {}; ]]
        ]
\end{tikzpicture};\hspace*{\stretch{1}}
\begin{tikzpicture}
\Tree [.\node[label={1}]{};
        \edge; [.\node[label=left:{2}]{};
        	\edge; [.\node [label=below:{$x$}] {}; ]
            \edge; [.\node[label=right:{3}]{};
                \edge; [.\node [label=below:{$x$}] {}; ]
                \edge; [.\node [label=below:{$y$}] {}; ]
                \edge; [.\node [label=below:{$y$}] {}; ]]
                \edge; [.\node [label=below:{$y$}] {}; ]]
        ]
\end{tikzpicture};\hspace*{\stretch{1}}
\begin{tikzpicture}
\Tree [.\node[label={1}]{};
        \edge; [.\node[label=left:{2}]{};
        	\edge; [.\node [label=below:{$x$}] {}; ]
          	\edge; [.\node [label=below:{$y$}] {}; ]
            \edge; [.\node[label=right:{3}]{};
                \edge; [.\node [label=below:{$x$}] {}; ]
                  \edge; [.\node [label=below:{$y$}] {}; ]
                \edge; [.\node [label=below:{$y$}] {}; ]]]
        ]
\end{tikzpicture}\hspace*{\stretch{1}}
\end{center}
\caption{The planted ternary
increasing plane trees on $[3]$ encoded by $xy^4{D_{G}}$ and $x^2y^3D_{G}$, respectively .}
\label{Fig01-ternary}
\end{figure}

Let $F$ be a ternary $k$-forest.
We introduce a labeling of $F$ as follows (see Figure~\ref{Fig01-ternary} for illustrations). Label each planted ternary increasing plane tree by $D_{G}$,
a left leaf by $x$, and middle and right leaves are both labeled by $y$.
If a tree has only one internal vertex and a leaf, then label the leaf by $x$.
Along the same lines as in the proof of Theorem~\ref{Eulerian-tree}, it is routine to verify the following.
\begin{theorem}\label{Eulerian-second-tree}
Let $G=\{x\rightarrow y^2, y\rightarrow y^2\}$.
For any $n\geqslant 1$, we have
\begin{equation*}\label{xDG9thm}
(xD_{G})^n=\sum_{k=1}^n\sum_{\ell=k}^nC_{n,k,\ell}x^\ell y^{2n-k-\ell}D_{G}^k,
\end{equation*}
where the coefficients $C_{n,k,\ell}$ satisfy the recurrence relation
\begin{equation}\label{Cnki-recu}
C_{n+1,k,\ell}=\ell C_{n,k,\ell}+(2n-k-\ell+1)C_{n,k,\ell-1}+C_{n,k-1,\ell-1},
\end{equation}
with the initial conditions $C_{1,1,1}=1$ and $C_{1,k,\ell}=0$ if $(k,\ell)\neq (1,1)$.
The coefficient $C_{n,k,\ell}$ counts ternary $k$-forests on $[n]$ with $2n-k-\ell$ middle and right leaves.
Moreover, we have $C_{n+1,1,\ell}=C_{n,\ell}$, where $C_{n,\ell}$ is the second-order Eulerian number, i.e., the number of Stirling permutations of order $n$ with $\ell$ descents.
\end{theorem}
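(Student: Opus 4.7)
The plan is to follow the three-step blueprint of Theorem~\ref{Eulerian-tree}: verify the expansion and recurrence inductively, give the bijective ternary-tree insertion argument, and then match the $k=1$ slice of the recurrence with the classical one for second-order Eulerian numbers.

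For the first step I would induct on $n$. The base case $n=1$ reads $xD_G$, which matches $C_{1,1,1}=1$. For the inductive step, applying $xD_G$ to a single term in the assumed expansion and using the derivation rules $D_G(x)=y^2$ and $D_G(y)=y^2$ gives
\[
xD_G\bigl(x^\ell y^{2n-k-\ell}D_G^k\bigr)=\ell\,x^\ell y^{2n-k-\ell+2}D_G^k+(2n-k-\ell)\,x^{\ell+1}y^{2n-k-\ell+1}D_G^k+x^{\ell+1}y^{2n-k-\ell}D_G^{k+1}.
\]
Each summand has the form $x^{\ell'}y^{2(n+1)-k'-\ell'}D_G^{k'}$, and collecting the coefficient of $x^\ell y^{2(n+1)-k-\ell}D_G^k$ in $(xD_G)^{n+1}$ yields exactly~\eqref{Cnki-recu}.

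For the combinatorial interpretation, I would first observe that a planted ternary increasing plane tree on $m$ internal vertices carries exactly $2m-1$ leaves (the root contributes one edge, every other internal vertex contributes three), so a ternary $k$-forest on $[n]$ has $2n-k$ leaves in total, and the proposed labelling ($D_G$ per component, $x$ for each left leaf, $y$ for each middle or right leaf) gives total weight $x^\ell y^{2n-k-\ell}D_G^k$ to a forest with $\ell$ left leaves. Inserting the new vertex $n+1$ then has three mutually exclusive and exhaustive possibilities: (i) attach to an $x$-leaf, giving $\ell$ choices and promoting $n+1$ to an internal vertex whose three children are labelled $x,y,y$ (weight factor $y^{2}$); (ii) attach to a $y$-leaf, giving $2n-k-\ell$ choices (weight factor $xy$); (iii) plant $n+1$ as a new leftmost root whose single child is an $x$-leaf, increasing $k$ by one (weight factor $xD_G$). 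The convention that a one-vertex tree has its sole leaf labelled $x$ keeps this description consistent in the edge case. These three cases precisely reproduce the three summands of~\eqref{Cnki-recu}, so the forest counts satisfy the same recurrence and initial data as $C_{n,k,\ell}$.

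For the moreover statement, I would specialize $k=1$ in~\eqref{Cnki-recu}: since $C_{n,0,\ell-1}=0$, this collapses to
\[
C_{n+1,1,\ell}=\ell\,C_{n,1,\ell}+(2n-\ell)\,C_{n,1,\ell-1},
\]
which is the well-known recurrence for the second-order Eulerian number $C_{n,\ell}$, obtained by inserting the pair $nn$ into a Stirling permutation in $\mq_{n-1}$: at any of the $\ell$ descent positions the descent count is preserved, while at any of the $2n-\ell$ ascent-or-plateau positions it increases by one. Combined with the matching base case $C_{2,1,1}=1=C_{1,1}$, an induction on $n$ yields $C_{n+1,1,\ell}=C_{n,\ell}$. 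I expect the main delicate step to be the forest-insertion bookkeeping (in particular preserving the convention that roots have a single child while all other internal vertices have three, and handling single-vertex components as base cases); beyond that, the argument is a direct transcription of the type $A$ analysis in Theorem~\ref{Eulerian-tree}, with binary branching replaced by ternary branching and the linear substitutions $x\to y,\ y\to y$ replaced by the quadratic ones $x\to y^2,\ y\to y^2$.
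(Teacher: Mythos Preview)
Your proposal is correct and follows exactly the template the paper intends: the paper omits the proof entirely, saying only that it is ``along the same lines as in the proof of Theorem~\ref{Eulerian-tree},'' and you have carried out precisely that transcription (induction on $n$ for the expansion and recurrence, the ternary-forest insertion argument for the combinatorial interpretation, and the $k=1$ specialization for the second-order Eulerian numbers). Two cosmetic slips worth fixing: the new root $n+1$ must be the \emph{rightmost} component, not the leftmost (roots are required to increase left to right), and in your Stirling-insertion explanation the ``$2n-\ell$ ascent-or-plateau positions'' belong to a permutation with $\ell-1$ descents rather than $\ell$; neither affects the recurrences, which you have written down correctly.
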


Define $$\widetilde{C}_n(x,y,z)=\sum_{k=1}^n\sum_{\ell=k}^nC_{n,k,\ell}x^\ell y^{2n-k-\ell}z^k.$$
It follows from~\eqref{Cnki-recu} that
$$\widetilde{C}_{n+1}(x,y,z)=(xz+2nxy)\widetilde{C}_n(x,y,z)+xy(y-x)\frac{\partial}{\partial x}\widetilde{C}_n(x,y,z)-xyz\frac{\partial}{\partial z}\widetilde{C}_n(x,y,z),$$
with $\widetilde{C}_0(x,y,z)=1$.
When $x=y$, one has
\begin{equation}\label{Cn11z02}
\widetilde{C}_{n+1}(x,x,z)=(xz+2nx^2)\widetilde{C}_n(x,x,z)-x^2z\frac{\partial}{\partial z}\widetilde{C}_n(x,x,z),
\end{equation}
Let $$\widetilde{C}(x,x,z;t)=\sum_{n=0}^\infty\widetilde{C}_n(x,x,z)\frac{t^n}{n!}.$$
Then~\eqref{Cn11z02} can be written as
$$(1-2x^2t)\frac{\partial}{\partial t}\widetilde{C}(x,x,z;t)=xz\widetilde{C}(x,x,z;t)-x^2z\frac{\partial}{\partial z}\widetilde{C}(x,x,z;t),~\widetilde{C}(x,x,z;0)=1.$$
With help of mathematical programming, we find the following result.
\begin{theorem}\label{thm14}
We have
$$\widetilde{C}(x,x,z;t)={\mathrm{e}}^{xzt\cdot \operatorname{Cat}(x^2t/2)},$$
where $\operatorname{Cat}(z)=\frac{1-\sqrt{1-4z}}{2z}$ is the generating function for the Catalan numbers $\frac{1}{n+1}\binom{2n}{n}$.
\end{theorem}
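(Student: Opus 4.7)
The plan is to solve the linear first-order PDE
$$(1-2x^2t)\frac{\partial\widetilde{C}}{\partial t}=xz\,\widetilde{C}-x^2z\frac{\partial\widetilde{C}}{\partial z},\qquad \widetilde{C}(x,x,z;0)=1,$$
derived immediately before the theorem statement. Combinatorially, $\widetilde{C}_n(x,x,z)$ enumerates ternary forests with $z$ marking the number of connected components, so by the exponential formula one expects a solution of the form $\widetilde{C}(x,x,z;t)=\exp\bigl(z\,\phi(x,t)\bigr)$, where $\phi$ is (essentially) the exponential generating function of a single planted ternary tree and satisfies $\phi(x,0)=0$. Independently, this ansatz is forced by the fact that the right-hand side of the PDE carries an overall $z$ while $\widetilde{C}(x,x,0;t)=1$.

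First I would substitute this ansatz into the PDE. The common factor $\widetilde{C}$ divides out, and the PDE collapses to a single linear first-order ODE in $t$ (with $x$ treated as a parameter):
$$(1-2x^2t)\,\phi_t+x^2\,\phi=x,\qquad \phi(x,0)=0.$$
I would then solve this ODE using the integrating factor $\mu(t)=(1-2x^2t)^{-1/2}$, which recasts the equation as $(\mu\phi)_t=x(1-2x^2t)^{-3/2}$. One integration in $t$ together with the initial condition gives the closed form
$$\phi(x,t)=\frac{1-\sqrt{1-2x^2t}}{x}.$$

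Finally, I would match this closed form with the stated Catalan exponent: taking $u=x^2t/2$,
$$xt\cdot\operatorname{Cat}(x^2t/2)=xt\cdot\frac{1-\sqrt{1-2x^2t}}{x^2t}=\frac{1-\sqrt{1-2x^2t}}{x}=\phi(x,t),$$
which completes the proof. The only conceptual step is recognising the exponential ansatz (motivated by the forest decomposition behind Theorem~\ref{Eulerian-second-tree}); everything else is routine calculus, and the main obstacle is simply bookkeeping in the integrating-factor computation. As a compact alternative that bypasses integration altogether, one can substitute $\phi=xt\cdot\operatorname{Cat}(x^2t/2)$ directly into the ODE and verify it on the nose using the Catalan functional equation $\operatorname{Cat}(u)=1+u\operatorname{Cat}(u)^2$.
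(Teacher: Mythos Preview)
Your argument is correct. Substituting the exponential ansatz $\widetilde{C}=\exp(z\phi)$ into the PDE and solving the resulting linear ODE by the integrating factor $(1-2x^2t)^{-1/2}$ is clean, and your final identification with $xt\cdot\operatorname{Cat}(x^2t/2)$ is right. The only point worth making explicit is why exhibiting \emph{a} solution suffices: the recurrence~\eqref{Cn11z02} (equivalently the PDE with the initial condition) determines $\widetilde{C}_n(x,x,z)$ uniquely for every $n$, so the formal power series solution is unique.

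By comparison, the paper does not actually prove this theorem: it derives the PDE and then simply records the closed form as having been found ``with help of mathematical programming''. Your write-up therefore supplies what the paper omits. The combinatorial motivation you give for the ansatz (the exponential formula applied to the forest interpretation of Theorem~\ref{Eulerian-second-tree}) is a genuine addition and makes the appearance of $\exp(z\phi)$ inevitable rather than a lucky guess; the alternative verification via the Catalan functional equation $\operatorname{Cat}(u)=1+u\,\operatorname{Cat}(u)^2$ is also a nice touch that avoids the integrating-factor bookkeeping entirely.
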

\begin{corollary}\label{propT}
For all $n\geqslant 0$, we have
$$\widetilde{C}_{n+1}(x,x,z)=\sum_{j=0}^n\frac{(n+j)!}{2^j(n-j)!j!}x^{n+1+j}z^{n+1-j}=\sum_{j=0}^nb(n,j)x^{n+1+j}z^{n+1-j},$$
where $b(n,j)$ is the {\it Bessel number of first kind}~\cite[A001498]{Sloane}.
\end{corollary}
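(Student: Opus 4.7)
The plan is to extract the coefficient of $t^{n+1}/(n+1)!$ from the exponential generating function given by Theorem~\ref{thm14}, and match it term-by-term with the claimed formula. The computation splits into three clean steps.

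First, I would expand the inner function. Using the definition $\operatorname{Cat}(z)=\sum_{m\geqslant 0}\frac{1}{m+1}\binom{2m}{m}z^m$, I obtain
\begin{equation*}
xzt\cdot\operatorname{Cat}(x^2t/2)=\sum_{m\geqslant 0}\frac{1}{m+1}\binom{2m}{m}\frac{x^{2m+1}z}{2^m}\,t^{m+1}.
\end{equation*}
Denote this power series by $f(t)$ and expand $\widetilde C(x,x,z;t)=e^{f(t)}=\sum_{k\geqslant 0}f(t)^k/k!$.

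Second, the coefficient of $t^{n+1}$ in $f(t)^k$ is a sum over compositions $m_1+\cdots+m_k=n+1-k$ of $\prod_i C_{m_i}\,x^{2m_i+1}z/2^{m_i}$, which factors as
\begin{equation*}
\frac{x^{2(n+1)-k}z^k}{2^{n+1-k}}\cdot\sum_{m_1+\cdots+m_k=n+1-k}\prod_{i=1}^kC_{m_i}.
\end{equation*}
Here I would invoke the standard Catalan convolution identity $[z^N]\operatorname{Cat}(z)^k=\tfrac{k}{2N+k}\binom{2N+k}{N}$; with $N=n+1-k$ this inner sum equals $\tfrac{k}{2n+2-k}\binom{2n+2-k}{n+1-k}$.

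Third, summing over $k$ and substituting $j=n+1-k$ gives
\begin{equation*}
[t^{n+1}]\,e^{f(t)}=\sum_{j=0}^n\frac{x^{n+1+j}z^{n+1-j}}{(n+1-j)!\,2^j}\cdot\frac{(n+1-j)(n+j)!}{j!(n+1)!}=\frac{1}{(n+1)!}\sum_{j=0}^n\frac{(n+j)!}{2^j(n-j)!j!}x^{n+1+j}z^{n+1-j}.
\end{equation*}
Multiplying both sides by $(n+1)!$ and recognizing the coefficient as $b(n,j)$ yields the corollary. The one step that requires care is the Catalan convolution identity and the algebraic tidying that turns $\tfrac{k}{2n+2-k}\binom{2n+2-k}{n+1-k}$ into $\tfrac{(n+1-j)(n+j)!}{j!(n+1)!}$; everything else is bookkeeping. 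As an alternative verification, one could instead plug the proposed closed form into the recurrence~\eqref{Cn11z02} and check both sides match via the Pascal-like identity $b(n+1,j)=b(n,j-1)+(n+j)b(n,j)$ satisfied by the Bessel numbers of the first kind, but the EGF route above is more direct given that Theorem~\ref{thm14} is already available.
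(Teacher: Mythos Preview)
Your proof is correct and follows essentially the same route as the paper: both expand $e^{xzt\operatorname{Cat}(x^2t/2)}=\sum_{k}\frac{(xzt)^k\operatorname{Cat}(x^2t/2)^k}{k!}$ and then invoke the closed form for the coefficients of $\operatorname{Cat}(z)^k$ (the paper cites Wilf~\cite[Eq.~(2.5.16)]{W1}, you state the convolution identity $[z^N]\operatorname{Cat}(z)^k=\tfrac{k}{2N+k}\binom{2N+k}{N}$ directly), after which the reindexing and simplification are identical in spirit.
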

\begin{proof}
Using~\cite[Eq.~(2.5.16)]{W1}, we get
\begin{align*}
\widetilde{C}(x,x,z;t)&=\sum_{j\geq0}\frac{x^jz^jt^j\operatorname{Cat}^j(x^2t/2)}{j!}\\
&=1+\sum_{j\geq1}\sum_{i\geq0}\frac{j}{(i+j)j!2^i}\binom{2i-1+j}{i}x^{2i+j}z^jt^{i+j}\\
&=1+\sum_{i\geq0}\sum_{j=0}^i\frac{j+1}{(i+1)(j+1)!2^{i-j}}\binom{2i-j}{i-j}x^{2i-j+1}z^{j+1}t^{i+1}.
\end{align*}
Hence, for all $n\geqslant1$, we get $$\widetilde{C}_n(x,x,z)=n!\sum_{j=0}^{n-1}\frac{j+1}{n(j+1)!2^{n-1-j}}\binom{2n-j-2}{n-1-j}x^{2n-j-1}z^{j+1},$$
which is equivalent to
$$\widetilde{C}_n(x,x,z)=\sum_{j=0}^{n-1}\frac{j!}{2^j}\binom{n-1}{j}\binom{n+j-1}{j}x^{n+j}z^{n-j}.$$
After simplifying, we get the desired explicit formula.
\end{proof}

The {\it trivariate second-order Eulerian polynomials} are defined by
\begin{equation*}\label{Cnxyz011}
C_n(x,y,z)=\sum_{\sigma\in\mqn}x^{\asc{(\sigma)}}y^{\des(\sigma)}z^{\plat{(\sigma)}}.
\end{equation*}
In~\cite[p.~317]{Dumont80},
Dumont found that
\begin{equation}\label{Dumont80}
C_{n+1}(x,y,z)=xyz\left(\frac{\partial}{\partial x}+\frac{\partial}{\partial y}+\frac{\partial}{\partial z}\right)C_n(x,y,z),
\end{equation}
which implies that $C_n(x,y,z)$ is symmetric in the variables $x,y$ and $z$.
By~\eqref{Dumont80}, it is clear that
\begin{equation}\label{grammar-Stirling}
D_{G}^n(x)=C_n(x,y,z),
\end{equation}
where $G=\{x \rightarrow xyz, y\rightarrow xyz, z\rightarrow xyz\}$.
In~\cite{Haglund12}, Haglund-Visontai introduced a refinement of the polynomial $C_n(x,y,z)$ by indexing each ascent,
descent and plateau by the values where they appear.
Using~\eqref{grammar-Stirling}, Chen-Fu~\cite{Chen22}
found that $C_n(x,y,z)$ is $e$-positive, i.e.,
\begin{equation}\label{Cnxyz}
C_n(x,y,z)=\sum_{i+2j+3k=2n+1}\gamma_{n,i,j,k}(x+y+z)^{i}(xy+yz+zx)^{j}(xyz)^k,
\end{equation}
where the coefficient $\gamma_{n,i,j,k}$ equals the number of 0-1-2-3 increasing plane trees
on $[n]$ with $k$ leaves, $j$ degree one vertices and $i$ degree two vertices.

A {\it ternary increasing tree} of size $n$ is an increasing plane tree with $3n+1$ nodes where each
interior node is labeled and has three children (a left child,
a middle child and a right child), while exterior nodes have no children and no labels.
Let $\mtn_n$ denote the set of ternary increasing trees of size $n$, see Figure~\ref{Fig05-xyz} for instance. For any $T\in\mtn_n$,
it is clear that $T$ has exactly $2n+1$ exterior nodes.
Let $\lends(T)$ (resp.~$\mends(T)$,~$\rends(T)$) denotes the number of exterior left nodes (resp.~exterior middle nodes, exterior right nodes) in $T$.
Using a recurrence relation that is equivalent to~\eqref{Dumont80}, Dumont~\cite[Proposition~1]{Dumont80} found that
\begin{equation}\label{QnTn}
C_n(x,y,z)=\sum_{T\in\mtn_n}x^{\lends(T)}y^{\mends(T)}z^{\rends(T)}.
\end{equation}

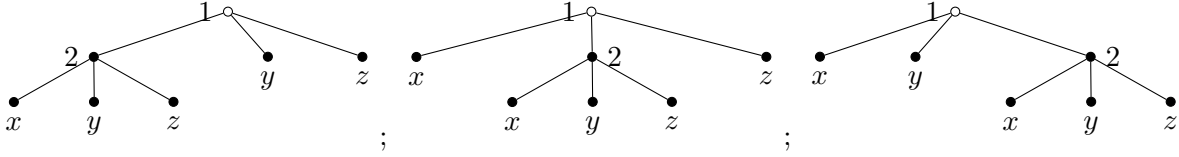
\begin{figure}[ht!]
\begin{center}
\hspace*{\stretch{1}}
\begin{tikzpicture}
\Tree [.\node[label=left:{1}]{};
            \edge; [.\node[label=left:{2}]{};
                \edge; [.\node [label=below:{$x$}] {}; ]
                 \edge; [.\node [label=below:{$y$}] {}; ]
                \edge; [.\node [label=below:{$z$}] {}; ]]
                 \edge; [.\node [label=below:{$y$}] {}; ]
            \edge; [.\node [label=below:{$z$}] {}; ]]
        ]
\end{tikzpicture};\hspace*{\stretch{1}}
\begin{tikzpicture}
\Tree [.\node[label=left:{1}]{};
        	\edge; [.\node [label=below:{$x$}] {}; ]
            \edge; [.\node[label=right:{2}]{};
                \edge; [.\node [label=below:{$x$}] {}; ]
                  \edge; [.\node [label=below:{$y$}] {}; ]
                \edge; [.\node [label=below:{$z$}] {}; ]]
                \edge; [.\node [label=below:{$z$}] {}; ]]
        ]
\end{tikzpicture};\hspace*{\stretch{1}}
\begin{tikzpicture}
\Tree [.\node[label=left:{1}]{};
        	\edge; [.\node [label=below:{$x$}] {}; ]
        \edge; [.\node [label=below:{$y$}] {}; ]
            \edge; [.\node[label=right:{2}]{};
                \edge; [.\node [label=below:{$x$}] {}; ]
                  \edge; [.\node [label=below:{$y$}] {}; ]
                \edge; [.\node [label=below:{$z$}] {}; ]]]
        ]
\end{tikzpicture}\hspace*{\stretch{1}}
\end{center}
\caption{The planted full ternary
increasing plane trees on $[2]$ encoded by $xy^2z^2{D_{G}},~x^2yz^2D_{G}$ and $x^2y^2zD_{G}$.}
\label{Fig05-xyz}
\end{figure}

Let $F$ be a full ternary $k$-forest.
We now give a labeling of $F$ as follows. Label each planted full ternary increasing plane tree by $D_{G}$,
a left leaf by $x$, a middle leaf by $y$ and a right leaf by $z$, see Figure~\ref{Fig05-xyz}.
Along the same lines as in the proof of Theorem~\ref{Eulerian-tree}, we find the following result.
\begin{theorem}\label{Eulerian-fulltree}
If $G=\{x\rightarrow 1, y\rightarrow 1, z\rightarrow 1\}$, then we have
\begin{equation}\label{full-xyz}
(xyzD_{G})^n=\sum_{k=1}^n\sum_{i=0}^{n-k}\sum_{j=0}^{n-k}\eta_{n,i,j,k}x^i y^{j}z^{2n-2k-i-j}(xyz)^kD_{G}^k,
\end{equation}
where the coefficient $\eta_{n,i,j,k}$ counts full ternary $k$-forests on $[n]$ with $i+k$ left leaves, $j+k$ middle leaves and $2n-k-i-j$ right leaves.
\end{theorem}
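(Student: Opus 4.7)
The plan is to mirror the two-part argument used for the binary analogue $(xyD_{G''})^n$ treated earlier in the paper, namely, first establishing the identity by induction on $n$ via the grammar commutation rules, and then identifying the resulting recursion with the one satisfied by the leaf-statistics on full ternary $k$-forests.

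For the algebraic part, I would proceed by induction on $n$, taking as base case $n=1$: $(xyzD_G)^1 = xyzD_G$ corresponds to $\eta_{1,0,0,1}=1$, matching the unique planted full ternary tree on $\{1\}$ with one leaf of each color. For the inductive step, I would first record the commutation
$$D_G\cdot x^a y^b z^c = a\,x^{a-1} y^b z^c + b\,x^a y^{b-1} z^c + c\,x^a y^b z^{c-1} + x^a y^b z^c\, D_G,$$
which follows from $D_G(x)=D_G(y)=D_G(z)=1$ and the Leibniz rule. Applied to a generic summand of the inductive hypothesis with effective exponents $(a,b,c)=(i+k,\,j+k,\,2n-k-i-j)$ and then multiplied by $xyz$, it produces exactly four terms with multiplicities $i+k$, $j+k$, $2n-k-i-j$, and $1$ (for the term carrying $D_G^{k+1}$). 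Extracting the coefficient of $x^{I+K}y^{J+K}z^{2(n+1)-K-I-J}D_G^K$ on both sides gives the recursion
\begin{align*}
\eta_{n+1,I,J,K} ={}& (I+K)\,\eta_{n,I,J-1,K} + (J+K)\,\eta_{n,I-1,J,K}\\
&{}+ (2n-I-J-K+2)\,\eta_{n,I-1,J-1,K} + \eta_{n,I,J,K-1}.
\end{align*}

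For the combinatorial half, I would label each planted full ternary tree by $D_G$, each left leaf by $x$, each middle leaf by $y$, and each right leaf by $z$, so that a full ternary $k$-forest on $[n]$ with $I+K$ left, $J+K$ middle, and $2n-K-I-J$ right leaves contributes weight $x^{I+K}y^{J+K}z^{2n-K-I-J}D_G^K$. Inserting the new vertex $n+1$ into such a forest proceeds in four ways: grafting it onto a chosen left leaf (the $x$-labeled leaf becomes an internal node whose three children are a new $x$, $y$, $z$ triple, with multiplicity $I+K$ equal to the number of left leaves), or analogously onto a middle or right leaf, or placing $n+1$ as the root of a new tree with three colored children (increasing the component count by one). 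These four cases are in bijection with the four terms of the recursion above, so the combinatorial and algebraic $\eta_{n,i,j,k}$ satisfy the same recursion with the same initial conditions, closing the induction.

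The only technical subtlety is the bookkeeping of the exponent shift: the number of left (resp.\ middle, right) leaves in a forest encoded by $x^i y^j z^{2n-2k-i-j}(xyz)^k D_G^k$ is $i+k$ (resp.\ $j+k$, $2n-k-i-j$), not the bare indices, because each of the $k$ roots contributes one leaf of each color through the $(xyz)^k$ factor. Once this reconciliation is made explicit, the argument is a direct transcription of the proof of the binary analogue $(xyD_{G''})^n$.
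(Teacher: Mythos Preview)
Your proposal is correct and follows essentially the same route as the paper: the paper does not write out a separate proof for this theorem but simply states that it is obtained ``along the same lines as in the proof'' of the earlier forest theorem, which is exactly the two-part algebraic/combinatorial induction you describe. Your recursion and the four-case insertion analysis for full ternary $k$-forests are accurate; the only cosmetic point is that the sentence ``each of the $k$ roots contributes one leaf of each color through the $(xyz)^k$ factor'' is better phrased as ``each of the $k$ trees contains at least one leaf of each color,'' since the $(xyz)^k$ factor is a rewriting of the monomial rather than a root-by-root contribution.
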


\begin{definition}
A partition of $\{1^{2},2^{2},\ldots,n^{2}\}$ into {\it Stirling-lists} is a set
partition of $\{1^{2},2^{2},\ldots,n^{2}\}$ for which the elements of each block are Stirling permutations and for all $i\in [n]$,
the two copies of $i$ appear in exactly one block. We always assume that each Stirling-list is prepended and appended by $0$.
\end{definition}

Let $\operatorname{SL}_n$ denote the set of partition of $\{1^{2},2^{2},\ldots,n^{2}\}$ into Stirling-lists, and let $\operatorname{bk}$ be the block statistic.
For example, $\operatorname{SL}_2=\{\{1122\},\{1221\},\{2211\},\{11\}\{22\}\}$, where the last set partition $\{11\}\{22\}$ has two blocks.
Combining~\eqref{QnTn} and Theorem~\ref{Eulerian-fulltree}, we get the following.
\begin{corollary}
If $G=\{x\rightarrow 1, y\rightarrow 1, z\rightarrow 1\}$, then
\begin{equation*}
(xyzD_{G})^n|_{D_G=q}=\sum_{p\in\operatorname{SL}_n}x^{\asc(p)}y^{\plat}z^{\des(p)}q^{\operatorname{bk}(p)}.
\end{equation*}
In particular, the coefficient $q$ in $(xyzD_{G})^n|_{D_G=q}$ is $C_n(x,y,z)$.
\end{corollary}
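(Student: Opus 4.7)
The plan is to combine Theorem~\ref{Eulerian-fulltree} with a natural statistic-preserving bijection between full ternary $k$-forests on $[n]$ and partitions in $\operatorname{SL}_n$ having $k$ blocks. After substituting $D_G=q$ in~\eqref{full-xyz}, we obtain
$$
(xyzD_G)^n\big|_{D_G=q}=\sum_F x^{\lends(F)}y^{\mends(F)}z^{\rends(F)}q^{k(F)},
$$
summed over all full ternary forests $F$ on $[n]$, where $k(F)$ denotes the number of trees. Hence it suffices to produce a bijection $\Psi$ from such forests onto $\operatorname{SL}_n$ sending the leaf-type statistics $(\lends,\mends,\rends)$ to the adjacency statistics $(\asc,\plat,\des)$ and the tree count to $\operatorname{bk}$.

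First I would define $\Psi$ via the ``doubled symmetric order''. On a planted full ternary increasing plane tree, set $\Psi(\text{leaf})=\varepsilon$, and for an internal node $v$ labeled $\ell$ with subtrees $T_L,T_M,T_R$, let $\Psi(T_v)=\Psi(T_L)\,\ell\,\Psi(T_M)\,\ell\,\Psi(T_R)$. Every label appears exactly twice, and the increasing-tree property guarantees that only labels $\geq \ell$ occur between the two copies of $\ell$, so $\Psi(T_v)$ is a Stirling permutation. Applying $\Psi$ tree-by-tree turns a $k$-forest into a Stirling-list partition with $k$ blocks; the inverse is the standard recursive decomposition of each block at its smallest label, whose two occurrences cut the block into three (possibly empty) Stirling sub-permutations forming the left, middle, and right subtrees of the new root.

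For the statistic matching, prepend and append $0$ to each block; then each leaf of $F$ corresponds to exactly one adjacency in the sentinelled readout (the $i$-th leaf in left-to-right leaf order maps to the $i$-th adjacency of its block), giving a bijection between the $2n+k$ leaves of $F$ and the $2n+k$ adjacencies of $\Psi(F)$. The three leaf types are then identified as follows. A middle leaf of $v$ (label $\ell$) arises exactly when $T_{v,M}$ is empty, in which case the two copies of $\ell$ are adjacent, yielding a plateau. A left leaf of $v$ corresponds to the adjacency immediately before the first copy of $\ell$; by tracing up the path from $v$ until reaching the first ancestor $w$ that is a middle or right child of some internal node $u$, the left neighbor of this first $\ell$ in the readout is either the sentinel $0$ (if no such $w$ exists) or the label $\ell_u$. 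Since $u$ is a strict ancestor of $v$, we have $\ell_u<\ell$, so the adjacency is an ascent. A right leaf of $v$ yields a descent by a symmetric argument. Thus $(\lends,\mends,\rends)\mapsto(\asc,\plat,\des)$ and $k(F)\mapsto \operatorname{bk}(\Psi(F))$.

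Combining the bijection with Theorem~\ref{Eulerian-fulltree} proves the first identity. For the second assertion, the coefficient of $q$ corresponds to $k=1$, i.e., single-tree forests, which are precisely the ternary increasing trees in $\mtn_n$; by~\eqref{QnTn} the resulting sum equals $C_n(x,y,z)$. The main technical obstacle is the tracing argument for left and right leaves: one must follow the recursive structure $\Psi(T_u)=\Psi(T_{u,L})\,\ell_u\,\Psi(T_{u,M})\,\ell_u\,\Psi(T_{u,R})$ upward from an arbitrary subtree to pin down the boundary character of its readout --- a careful induction that is routine once the recursion is set up, but easy to bungle without it.
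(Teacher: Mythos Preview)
Your proof is correct and follows essentially the same route the paper gestures at: the paper's entire argument is the one-line remark ``Combining~\eqref{QnTn} and Theorem~\ref{Eulerian-fulltree}, we get the following,'' leaving the bijection between full ternary $k$-forests and Stirling-list partitions, together with the statistic matching, implicit. Your doubled-symmetric-order map $\Psi$ and the tracing argument identifying left/middle/right leaves with ascents/plateaus/descents are exactly what is needed to turn that remark into a proof, and your assignment $(\lends,\mends,\rends)\mapsto(\asc,\plat,\des)$ agrees with the variable labeling in both Theorem~\ref{Eulerian-fulltree} and the Corollary.
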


\begin{theorem}
Let $G=\{x\rightarrow 1, y\rightarrow 1, z\rightarrow 1\}$.
Then we have
\begin{equation}\label{xyzG3}
(xyzD_{G})^n=\sum_{k=1}^n\sum_{2j+3\ell=0}^{2n-2k}\beta_{n,k,j,\ell}(x+y+z)^{2n-2k-2j-3\ell} (xy+xz+yz)^{j}(xyz)^{\ell+k}D_{G}^k,
\end{equation}
where the coefficients $\beta_{n,k,j,\ell}$ satisfy the recursion
\begin{equation}\label{xyzbeta}
\begin{aligned}
\beta_{n+1,k,j,\ell}&=(\ell+k)\beta_{n,k,j-1,\ell}+2(j+1)\beta_{n,k,j+1,\ell-1}+\\
&3(2n-2k-2j-3\ell+3)\beta_{n,k,j,\ell-1}+\beta_{n,k-1,j,\ell},
\end{aligned}
\end{equation}
with the initial conditions $\beta_{1,1,0,0}=1$ and $\beta_{1,k,j,\ell}=0$ for any $(k,j,\ell)\neq (1,0,0)$.
\end{theorem}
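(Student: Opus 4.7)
The proof strategy is parallel to part (C) of Theorem~\ref{Eulerian-fulltree}: pass to the elementary symmetric polynomials $e_1=x+y+z$, $e_2=xy+xz+yz$ and $e_3=xyz$. A direct computation with $G=\{x\to 1, y\to 1, z\to 1\}$ gives $D_G(e_1)=3$, $D_G(e_2)=2e_1$, and $D_G(e_3)=e_2$. Introducing the derived grammar $\widetilde G=\{e_1\to 3,\ e_2\to 2e_1,\ e_3\to e_2\}$, the identity $(xyzD_G)^n=(e_3D_{\widetilde G})^n$ therefore holds on the subalgebra generated by $e_1,e_2,e_3$, and it is enough to establish
$$(e_3D_{\widetilde G})^n=\sum_{k,j,\ell}\beta_{n,k,j,\ell}\,e_1^{\,2n-2k-2j-3\ell}\,e_2^{\,j}\,e_3^{\,\ell+k}\,D_{\widetilde G}^{k},$$
with the coefficients $\beta_{n,k,j,\ell}$ determined by the recursion in the statement. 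Substituting back $e_1,e_2,e_3$ then yields \eqref{xyzG3}.

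Proceed by induction on $n$. The base case $n=1$ is immediate since $(e_3D_{\widetilde G})^1=e_3D_{\widetilde G}$, matching $\beta_{1,1,0,0}=1$ and $\beta_{1,k,j,\ell}=0$ otherwise. For the inductive step, apply $e_3D_{\widetilde G}$ to a typical monomial $e_1^{a}e_2^{j}e_3^{\ell+k}D_{\widetilde G}^{k}$ with $a=2n-2k-2j-3\ell$. Using the three substitution rules of $\widetilde G$, the Leibniz rule, and the operator identity $D_{\widetilde G}\circ M=D_{\widetilde G}(M)+M\circ D_{\widetilde G}$, one obtains exactly four contributions:
\begin{align*}
e_3D_{\widetilde G}\bigl(e_1^{a}e_2^{j}e_3^{\ell+k}D_{\widetilde G}^{k}\bigr)
&=3a\,e_1^{a-1}e_2^{j}e_3^{\ell+k+1}D_{\widetilde G}^{k}+2j\,e_1^{a+1}e_2^{j-1}e_3^{\ell+k+1}D_{\widetilde G}^{k}\\
&\quad+(\ell+k)\,e_1^{a}e_2^{j+1}e_3^{\ell+k}D_{\widetilde G}^{k}+e_1^{a}e_2^{j}e_3^{\ell+k+1}D_{\widetilde G}^{k+1}.
\end{align*}

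Summing over all monomials at level $n$ and extracting the coefficient of $e_1^{2(n+1)-2k-2j-3\ell}e_2^{j}e_3^{\ell+k}D_{\widetilde G}^{k}$, each contribution above matches exactly one predecessor term: the third summand, applied to $(k,j-1,\ell)$, produces $(\ell+k)\beta_{n,k,j-1,\ell}$; the second, from $(k,j+1,\ell-1)$, produces $2(j+1)\beta_{n,k,j+1,\ell-1}$; the first, from $(k,j,\ell-1)$, produces $3(2n-2k-2j-3\ell+3)\beta_{n,k,j,\ell-1}$; and the fourth, from $(k-1,j,\ell)$, produces $\beta_{n,k-1,j,\ell}$. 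Adding these four pieces reproduces the recursion~\eqref{xyzbeta} verbatim and closes the induction.

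The principal obstacle is purely an arithmetic bookkeeping: for each of the four cases one must verify that after the index shift the $e_1$-exponent of the predecessor monomial equals $2n-2k'-2j'-3\ell'$ for the updated indices $(k',j',\ell')$, so that the summation range $2j+3\ell\leq 2n-2k$ is preserved under the recursion. This is a short routine check — for instance in the third case, $a'-1=2n-2k-2j-3(\ell-1)-1=2(n+1)-2k-2j-3\ell$ as required — but must be carried out for all four cases to confirm self-consistency. Once done, the expansion is established and the proof is complete.
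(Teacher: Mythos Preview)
Your argument is correct and follows essentially the same route as the paper: pass to the elementary symmetric polynomials $u=e_1$, $v=e_2$, $w=e_3$, observe that $D_G$ acts on them via the derived grammar $\{u\to 3,\ v\to 2u,\ w\to v\}$, and then verify the normal-ordered expansion of $(wD_{G'})^n$ by induction. You in fact supply more detail than the paper, which simply asserts that the induction is routine; your explicit computation of the four contributions and the matching of indices is exactly the verification the paper omits.
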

\begin{proof}
Consider a change of the grammar $G=\{x\rightarrow 1, y\rightarrow 1, z\rightarrow 1\}$. Setting $$u=x+y+z,~v=xy+xz+yz,~w=xyz,$$
we have $D_{G}(u)=3,~D_{G}(v)=2u$ and $D_{G}(w)=v$.
Let $G'=\{u\rightarrow 3,~v\rightarrow 2u,~w\rightarrow v\}$. Then we have
$\left(xyzD_{G}\right)^n=\left(wD_{G'}\right)^n$.
Note that
$$\left(wD_{G'}\right)^2=wvD_{G'}+w^2D_{G'}^2,~\left(wD_{G'}\right)^3=(v^2+2wu)wD_{G'}+3vw^2D_{G'}^2+w^3D_{G'}^3.$$
By induction, it is routine to check that there exist nonnegative integers $\alpha_{n,k,i,j,\ell}$ such that
\begin{align*}
\left(wD_{G'}\right)^n&=\sum_{k=1}^n\sum_{i+2j+3\ell=2n-2k}\alpha_{n,k,i,j,\ell}u^i v^{j}w^{\ell+k}D_{G'}^k\\
&=\sum_{k=1}^n\sum_{2j+3\ell=0}^{2n-2k}\beta_{n,k,j,\ell}u^{2n-2k-2j-3\ell} v^{j}w^{\ell+k}D_{G'}^k
\end{align*}
where $\beta_{n,k,j,\ell}$ satisfy the recursion~\ref{xyzbeta}.
Then upon taking $u=x+y+z,~v=xy+xz+yz$ and $w=xyz$, we get~\eqref{xyzG3}.
This completes the proof.
\end{proof}

Let
$$\beta_n:=\beta_n(u,v,w,q)=\sum_{k=1}^n\sum_{2j+3\ell=0}^{2n-2k}\beta_{n,k,j,\ell}u^{2n-2k-2j-3\ell} v^{j}w^{\ell+k}q^k.$$
It follows from~\eqref{xyzbeta} that
$$\beta_{n+1}=wq\beta_n+3w\frac{\partial}{\partial u}\beta_n+2uw\frac{\partial}{\partial v}\beta_n+vw\frac{\partial}{\partial w}\beta_n.$$
Below are these polynomials for $n\leqslant 4$:
\begin{align*}
\beta_1&=wq,~\beta_2=vwq+w^2q^2,~\beta_3=(v^2w+2uw^2)q+ 3vw^2q^2+w^3q^3,\\
\beta_4&=(v^3w+8uvw^2+6w^3)q+(7v^2w^2+8uw^3)q^2+6v w^3q^3+w^4q^4.
\end{align*}

Let $\eta_{n,i,j,k}$ be defined by~\eqref{full-xyz}.
Define
$$\eta_n(x,y,z,q)=\sum_{k=1}^n\sum_{i=0}^{n-k}\sum_{j=0}^{n-k}\eta_{n,i,j,k}x^i y^{j}z^{2n-2k-i-j}(xyz)^kq^k.$$
\begin{corollary}
The multivariate polynomials $\eta_n(x,y,z,q)$ are partial $e$-positive, i.e.,
$$\eta_n(x,y,z,q)=\sum_{k=1}^nq^k\sum_{2j+3\ell=0}^{2n-2k}\beta_{n,k,j,\ell}(x+y+z)^{2n-2k-2j-3\ell} (xy+xz+yz)^{j}(xyz)^{\ell+k}.$$
\end{corollary}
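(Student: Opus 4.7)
The plan is to derive the corollary as an immediate consequence of the two expansions of the operator $(xyzD_{G})^{n}$ already established in this section. The expansion~\eqref{full-xyz} writes $(xyzD_{G})^{n}$ as a sum of monomial-type terms with coefficients $\eta_{n,i,j,k}$ multiplying $D_{G}^{k}$, while the expansion~\eqref{xyzG3} writes the same operator in the elementary-symmetric basis $\{x+y+z,\,xy+xz+yz,\,xyz\}$ with coefficients $\beta_{n,k,j,\ell}$ multiplying $D_{G}^{k}$. Since the coefficient of $D_{G}^{k}$ in $(xyzD_{G})^{n}$ is a uniquely determined polynomial in $x,y,z$, the two expansions must agree coefficient-by-coefficient in $D_{G}^{k}$.

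Performing the formal substitution $D_{G}\mapsto q$ on both expansions therefore yields a polynomial identity in $\mathbb{Z}[x,y,z,q]$. Applied to~\eqref{full-xyz} this substitution produces exactly $\eta_n(x,y,z,q)$ by the definition given just before the corollary; applied to~\eqref{xyzG3} it produces the right-hand side of the claimed identity. The two sides are thus equal, which establishes the stated $e$-basis expansion.

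To justify calling this a \emph{partial $e$-positive} expansion, I would then verify that each $\beta_{n,k,j,\ell}$ is a nonnegative integer. This follows by a straightforward induction on $n$ from the recursion~\eqref{xyzbeta} together with the initial data $\beta_{1,1,0,0}=1$: each prefactor $\ell+k$, $2(j+1)$, $3(2n-2k-2j-3\ell+3)$, and $1$ is nonnegative throughout the admissible index range $0\leqslant 2j+3\ell\leqslant 2n-2k$, so nonnegativity propagates. There is no real obstacle in this argument; the substantive work has already been done in proving the expansions~\eqref{full-xyz} and~\eqref{xyzG3}, and the corollary is merely the formal consequence of equating them after setting $D_{G}=q$.
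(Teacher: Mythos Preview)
Your proposal is correct and matches the paper's own (implicit) approach: the corollary is stated without proof precisely because it is the immediate consequence of equating the two normal-ordered expansions~\eqref{full-xyz} and~\eqref{xyzG3} of $(xyzD_G)^n$ and then replacing $D_G$ by $q$, with the nonnegativity of the $\beta_{n,k,j,\ell}$ already asserted in the proof of the preceding theorem. Your added remark that the coefficient of each $D_G^k$ is uniquely determined (so the substitution $D_G\mapsto q$ is legitimate) and your verification that the prefactor $3(2n-2k-2j-3\ell+3)$ is nonnegative on the support of $\beta_{n,k,j,\ell-1}$ make the argument slightly more explicit than the paper, but the route is the same.
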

\section{Normal ordered grammars related to type $B$ Eulerian polynomials}\label{section04}
In the previous sections, we illustrate the basic idea of normal ordered grammars. Along the same lines as in the proof of Theorem~\ref{Eulerian-tree}, one can explore
normal ordered grammars associated with the other polynomials. In the sequel, we investigate some normal ordered grammars
related to the type $B$ Eulerian polynomials.

Let $\pm[n]=[n]\cup\{-1,-2,\ldots,-n\}$, and let $B_n$ be the hyperoctahedral group of rank $n$.
Elements of $B_n$ are signed permutations of $\pm[n]$ with the property that $\sigma(-i)=-\sigma(i)$ for all $i\in [n]$.
The {\it type $B$ Eulerian polynomials} are defined by
$$B_n(x)=\sum_{\sigma\in B_n}x^{\operatorname{des}_B(\sigma)},$$
where
$\operatorname{des}_B(\sigma)=\#\{i\in\{0,1,2,\ldots,n-1\}:~\sigma(i)>\sigma({i+1})\}$ and $\sigma(0)=0$ (see~\cite{Bre94} for details).
They satisfy the recursion (see~\cite[Eq.~(11)]{Bre94}):
\begin{equation*}\label{EulerianB}
B_{n}(x)=(1+(2n-1)x)B_{n-1}(x)+2x(1-x)\frac{\mathrm{d}}{\mathrm{d}x}B_{n-1}(x),~B_0(x)=1.
\end{equation*}
Let $B_n(x)=\sum_{k=0}^nB(n,k)x^k$. One has
\begin{equation}\label{EulerianNumberB}
B(n,k)=(1+2k)B(n-1,k)+(2n-2k+1)B(n-1,k-1),~B(0,0)=1.
\end{equation}

Let $G=\{x\rightarrow xy^2,~y\rightarrow x^2y\}$.
According to~\cite[Theorem~10]{Ma131}, we have
\begin{equation*}
D_G^n(xy)=xy^{2n+1}B_n\left(\frac{x^2}{y^2}\right),
\end{equation*}
which can be restated as
\begin{equation}\label{xDG701}
(xyD_{G'})^n(xy)=xy^{2n+1}B_n\left(\frac{x^2}{y^2}\right),~{\text{where $G'=\{x\rightarrow y,~y\rightarrow x\}$}};
\end{equation}
\begin{equation}\label{xDG702}
(xD_{G''})^n(xy)=xy^{2n+1}B_n\left(\frac{x^2}{y^2}\right),~{\text{where $G''=\{x\rightarrow y^2,~y\rightarrow xy\}$}}.
\end{equation}

It is easy to verify the following two results.
\begin{proposition}\label{Eulerian-B-tree}
If $G'=\{x\rightarrow y, y\rightarrow x\}$, then
\begin{equation*}
(xyD_{G'})^n=\sum_{k=1}^n\sum_{\ell=0}^{\lrf{(2n-k)/2}}B_{n,k,\ell}x^{k+2\ell} y^{2n-k-2\ell}D_{G'}^k,
\end{equation*}
where the coefficients $B_{n,k,\ell}$ satisfy the recurrence relation
\begin{equation}\label{B-recu}
B_{n+1,k,\ell}=(k+2\ell) B_{n,k,\ell}+(2n-k-2\ell+2)B_{n,k,\ell-1}+B_{n,k-1,\ell},
\end{equation}
with the initial conditions $B_{1,1,0}=1$ and $B_{1,k,\ell}=0$ if $(k,\ell)\neq (1,0)$.
\end{proposition}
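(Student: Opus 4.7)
The plan is to proceed by induction on $n$, following the same strategy as in part (A) of the proof of Theorem~\ref{Eulerian-tree}. The base case $n=1$ is immediate: $(xyD_{G'})^{1}=xyD_{G'}$ agrees with the asserted expansion under the initial conditions $B_{1,1,0}=1$ and $B_{1,k,\ell}=0$ for $(k,\ell)\neq(1,0)$.

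For the inductive step, assume the expansion is valid for $n$. Since $D_{G'}$ is a derivation with $D_{G'}(x)=y$ and $D_{G'}(y)=x$, one has
$$D_{G'}(x^{a}y^{b})=a\,x^{a-1}y^{b+1}+b\,x^{a+1}y^{b-1}.$$
Specializing $a=k+2\ell$ and $b=2n-k-2\ell$ and multiplying by $xy$ on the left yields the key identity
$$xyD_{G'}\bigl(x^{k+2\ell}y^{2n-k-2\ell}D_{G'}^{k}\bigr)=(k+2\ell)x^{k+2\ell}y^{2(n+1)-k-2\ell}D_{G'}^{k}+(2n-k-2\ell)x^{k+2(\ell+1)}y^{2(n+1)-k-2(\ell+1)}D_{G'}^{k}+x^{(k+1)+2\ell}y^{2(n+1)-(k+1)-2\ell}D_{G'}^{k+1}.$$
Summing this over all $(k,\ell)$ weighted by $B_{n,k,\ell}$ produces the expansion of $(xyD_{G'})^{n+1}$ in monomials of the prescribed shape.

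Extracting the coefficient of $x^{k+2\ell}y^{2(n+1)-k-2\ell}D_{G'}^{k}$ on both sides then gives precisely
$$B_{n+1,k,\ell}=(k+2\ell)B_{n,k,\ell}+(2n-k-2\ell+2)B_{n,k,\ell-1}+B_{n,k-1,\ell},$$
because the three terms of the key identity contribute from the source indices $(k,\ell)$, $(k,\ell-1)$ (producing the multiplier $2n-k-2(\ell-1)=2n-k-2\ell+2$), and $(k-1,\ell)$, respectively.

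I anticipate no substantive obstacle: the argument is essentially a bookkeeping verification parallel to Theorem~\ref{Eulerian-tree}. The only subtlety worth watching is that the summation range $0\le\ell\le\lrf{(2n-k)/2}$ is preserved at step $n+1$ and that the boundary values of $\ell$ do not introduce spurious contributions; this follows automatically because the multipliers $k+2\ell$ and $2n-k-2\ell+2$ vanish precisely when the corresponding indices fall outside the admissible range, so setting the out-of-range coefficients $B_{n,k,-1}$ and $B_{n,k,\lrf{(2n-k)/2}+1}$ to zero is consistent.
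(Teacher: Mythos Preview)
Your proof is correct and follows exactly the approach the paper intends: the paper states this proposition without proof, remarking only that it is ``easy to verify,'' and your induction parallel to part (A) of Theorem~\ref{Eulerian-tree} is precisely that routine verification. One small quibble: your closing claim that the multipliers vanish at the boundary is not literally true (for instance, when $\ell=0$ the coefficient $2n-k-2\ell+2=2n-k+2$ in front of $B_{n,k,-1}$ is nonzero), but this is harmless since the out-of-range coefficients are zero by convention and the summation range is preserved by the reindexing, as you correctly assert.
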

\begin{proposition}\label{Eulerian-B-tree02}
If $G''=\{x\rightarrow y^2, y\rightarrow xy\}$, then
\begin{equation*}
(xD_{G''})^n=\sum_{k=1}^n\sum_{\ell=0}^{\lrf{(2n-k)/2}}E_{n,k,\ell}x^{k+2\ell} y^{2n-2k-2\ell}D_{G''}^k,
\end{equation*}
where the coefficients $E_{n,k,\ell}$ satisfy the recurrence relation
\begin{equation}\label{B-recu02}
E_{n+1,k,\ell}=(k+2\ell) E_{n,k,\ell}+(2n-2k-2\ell+2)E_{n,k,\ell-1}+E_{n,k-1,\ell},
\end{equation}
with the initial conditions $E_{1,1,0}=1$ and $E_{1,k,\ell}=0$ if $(k,\ell)\neq (1,0)$.
\end{proposition}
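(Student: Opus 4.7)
The plan is to prove Proposition~\ref{Eulerian-B-tree02} by induction on $n$, following the template established in the proofs of Theorem~\ref{Eulerian-tree} and Proposition~\ref{Eulerian-B-tree}. For the base case $n=1$, one has $(xD_{G''})^1 = xD_{G''}$, which is of the form $x^{1+2\cdot 0}y^{2\cdot 1 - 2\cdot 1 - 2\cdot 0}D_{G''}^1$, so $E_{1,1,0}=1$ and all other $E_{1,k,\ell}$ vanish, matching the initial conditions.

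For the inductive step, I would write
\[
(xD_{G''})^{n+1} = xD_{G''}\Bigl((xD_{G''})^n\Bigr)
= xD_{G''}\Biggl(\sum_{k=1}^n\sum_{\ell=0}^{\lrf{(2n-k)/2}} E_{n,k,\ell}\,x^{k+2\ell}\,y^{2n-2k-2\ell}\,D_{G''}^k\Biggr),
\]
and apply the derivation rule to a single term. Using $D_{G''}(x)=y^2$ and $D_{G''}(y)=xy$, one computes
\[
D_{G''}\bigl(x^{k+2\ell}y^{2n-2k-2\ell}\bigr)
= (k+2\ell)\,x^{k+2\ell-1}y^{2n-2k-2\ell+2}
+ (2n-2k-2\ell)\,x^{k+2\ell+1}y^{2n-2k-2\ell}.
\]
Together with the product rule applied to $f\cdot D_{G''}^k$, the action of $xD_{G''}$ on each summand produces three contributions: the first multiplies by $x$ to yield $x^{k+2\ell}y^{2n-2k-2\ell+2}D_{G''}^k$ with coefficient $(k+2\ell)E_{n,k,\ell}$; the second gives $x^{k+2\ell+2}y^{2n-2k-2\ell}D_{G''}^k$ with coefficient $(2n-2k-2\ell)E_{n,k,\ell}$; and the third, coming from the derivative hitting $D_{G''}^k$, gives $x^{k+2\ell+1}y^{2n-2k-2\ell}D_{G''}^{k+1}$ with coefficient $E_{n,k,\ell}$.

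The final step is bookkeeping: rewrite each of the three contributions in the target form $x^{k'+2\ell'}y^{2(n+1)-2k'-2\ell'}D_{G''}^{k'}$. A direct check shows the three types correspond to the index shifts $(k',\ell')=(k,\ell)$, $(k,\ell+1)$, and $(k+1,\ell)$, respectively, with the correct $y$-exponent in each case. Extracting the coefficient of $x^{k+2\ell}y^{2(n+1)-2k-2\ell}D_{G''}^k$ then assembles the three pieces
\[
E_{n+1,k,\ell} = (k+2\ell)E_{n,k,\ell} + (2n-2k-2\ell+2)E_{n,k,\ell-1} + E_{n,k-1,\ell},
\]
which is precisely~\eqref{B-recu02}. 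No genuine obstacle arises: the main thing to watch is that the $y$-exponent bookkeeping is consistent after each shift and that the summation bound $\ell\leqslant\lrf{(2n-k)/2}$ is preserved (the only nontrivial case being the second contribution, where $\ell\to\ell+1$ uses $2n-2k-2\ell\geqslant 0$, ensuring no negative $y$-powers appear). Since the structure is essentially identical to Proposition~\ref{Eulerian-B-tree}, a combinatorial tree/forest interpretation is not needed, and the proof reduces to this short inductive calculation.
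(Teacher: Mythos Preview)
Your proposal is correct and is exactly the routine verification the paper has in mind: the paper does not write out a proof for this proposition, merely stating ``It is easy to verify the following two results,'' and your inductive computation (mirroring the argument for Theorem~\ref{Eulerian-tree}) is precisely that verification. The derivative calculation, the three resulting monomials, and the index shifts $(k,\ell)\to(k,\ell)$, $(k,\ell+1)$, $(k+1,\ell)$ are all correct, and collecting coefficients gives~\eqref{B-recu02} as claimed.
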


Comparing~\eqref{B-recu} with~\eqref{EulerianNumberB}, we see that $B_{n+1,1,\ell}=B(n,\ell)$,
and so we obtain a normal ordered grammatical interpretation of the type $B$ Eulerian polynomials:
\begin{equation}\label{Bnx}
B_n(x)=\sum_{\ell=0}^nB_{n+1,1,\ell}x^{\ell}.
\end{equation}

Let $$B_n(x,y,z)=\sum_{k=1}^n\sum_{\ell=0}^{\lrf{(2n-k)/2}}B_{n,k,\ell}x^{k+2\ell} y^{2n-k-2\ell}z^k.$$
It follows from~\eqref{B-recu} that
\begin{equation}\label{Bnxyz-recu}
B_{n+1}(x,y,z)=(xyz+2nx^2)B_n(x,y,z)+x(y^2-x^2)\frac{\partial}{\partial x}B_n(x,y,z),~B_0(x,y,z)=1.
\end{equation}
In particular, $$B_1(x,1,1)=x,~B_2(x,1,1)=x+x^2+x^3,~B_3(x,1,1)=x+3x^2+7x^3+3x^4+x^5.$$

We now recall two statistics of Stirling permutations.
An occurrence of an {\it ascent-plateau} of a Stirling permutation $\sigma\in\mqn$ is an index $i$ such that $\sigma_{i-1}<\sigma_{i}=\sigma_{i+1}$, where $i\in\{2,3,\ldots,2n-1\}$.
Let $\ap(\sigma)$ be the number of ascent-plateaus of $\sigma$.
The {\it flag ascent-plateau} statistic is defined by $$\fap(\sigma)=\left\{
               \begin{array}{ll}
                 2\ap(\sigma)+1, & \hbox{if $\sigma_1=\sigma_2$;} \\
                 2\ap(\sigma), & \hbox{otherwise.}
               \end{array}
             \right.
$$
Let $F_n(x)=\sum_{\sigma\in\mqn}x^{\fap(\sigma)}$.
It follows from~\cite[Eq.~(16)]{Ma20} that
\begin{equation}\label{Fnx}
F_{n+1}(x)=(x+2nx^2)F_n(x)+x(1-x^2)\frac{\mathrm{d}}{\mathrm{d}x}F_n(x).
\end{equation}
Comparing~\eqref{Fnx} with~\eqref{Bnxyz-recu}, we see that
\begin{equation}\label{Bnx11}
B_n(x,1,1)=\sum_{\sigma\in\mqn}x^{\operatorname{fap}(\sigma)}.
\end{equation}
From~\eqref{Bnx} and~\eqref{Bnx11}, we see that
Stirling permutations are closely related to signed permutations. Moreover,
by~\eqref{B-recu02}, we see that
\begin{equation}
E_{n+1,1,\ell}=(1+2\ell) E_{n,1,\ell}+(2n-2\ell)E_{n,1,\ell-1}.
\end{equation}
Using~\cite[Eq.~(6)]{Ma15}, we find that $E_{n+1,1,\ell}=\{\sigma\in\mqn: \ap(\sigma)=\ell\}$, i.e., $E_{n+1,1,\ell}$ equals the number of Stirling permutations in
$\mqn$ with $\ell$ ascent-plateaus.

Let $$E_n(x,y,z)=\sum_{k=1}^n\sum_{\ell=0}^{\lrf{(2n-k)/2}}E_{n,k,\ell}x^{k+2\ell} y^{2n-2k-2\ell}z^k.$$
It follows from~\eqref{B-recu02} that
\begin{equation*}\label{Bnxyz-recu02}
E_{n+1}(x,y,z)=(xz+2nx^2)E_n(x,y,z)+x(y^2-x^2)\frac{\partial}{\partial x}E_n(x,y,z)-x^2z\frac{\partial}{\partial z}E_n(x,y,z),
\end{equation*}
with $E_0(x,y,z)=1$.
In particular, one has
\begin{equation*}\label{Bnxyz-recu03}
E_{n+1}(1,1,z)=(z+2n)E_n(1,1,z)-z\frac{\partial}{\partial z}E_n(1,1,z).
\end{equation*}
Using~\eqref{Cn11z02} and Corollary~\ref{propT}, we arrive at
$$E_n(1,1,z)=\widetilde{C}_{n}(1,1,z)=\sum_{j=0}^{n-1}\frac{(n+j-1)!}{2^j(n-1-j)!j!}z^{n-j}~\text{for any $n\geqslant 1$},$$
and so $E_n(1,1,z)$ is the Bessel polynomial of the first kind.

Let $\pi\in\msn$. The {\it up-down runs} of a permutation $\pi\in\msn$ are the alternating runs of $\pi$ endowed with a 0
in the front. Let $\udrun(\pi)$ denote the number of up-down runs of $\pi$.
The {\it up-down run polynomials} $T_n(x)$ are defined by $T_n(x)=\sum_{\pi\in\msn}x^{\udrun(\pi)}$.
The polynomials $T_n(x)$ satisfy the recurrence relation
\begin{equation}\label{Rnx02}
T_{n+1}(x)=x(1+nx)T_{n}(x)+x\left(1-x^2\right)\frac{\mathrm{d}}{\mathrm{d}x}T_{n}(x),
\end{equation}
with initial conditions $T_0(x)=1$ and $T_1(x)=x$ ~(see~\cite{Chen2301,Ma20,Zhuang17} for details).

We end this paper by giving the following result, and omit the proof for simplicity.
\begin{proposition}\label{Eulerian-B-tree02}
Let $G'=\{x\rightarrow y, y\rightarrow x\}$.
\begin{itemize}
  \item [$(i)$] For $n\geqslant 1$, we have
\begin{equation*}
(xD_{G'})^n=\sum_{k=1}^n\sum_{\ell=0}^{\lrf{(2n-k)/2}}W_{n,k,\ell}x^{k+2\ell} y^{n-k-2\ell}D_{G'}^k,
\end{equation*}
where the coefficients $W_{n,k,\ell}$ satisfy the recurrence relation
\begin{equation*}\label{H-recu03}
W_{n+1,k,\ell}=(k+2\ell) W_{n,k,\ell}+(n-k-2\ell+2)W_{n,k,\ell-1}+W_{n,k-1,\ell},
\end{equation*}
with the initial conditions $W_{1,1,0}=1$ and $W_{1,k,\ell}=0$ if $(k,\ell)\neq (1,0)$.
  \item [$(ii)$] Let $$W_n(x,y,z)=\sum_{k=1}^n\sum_{\ell=0}^{\lrf{(2n-k)/2}}W_{n,k,\ell}x^{k+2\ell} y^{n-k-2\ell}D_{G'}^k.$$ Then we have
  \begin{equation*}
  W_{n+1}(x,y,z)=x\left(z+n\frac{x}{y}\right)W_n(x,y,z)+xy\left(1-\frac{x^2}{y^2}\right)\frac{\mathrm{d}}{\mathrm{d}x}W_n(x,y,z),
  \end{equation*}
  with the initial condition $W_0(x,y,z)=1$. In particular,
  $$W_n(1,1,z)=z(z+1)(z+2)\cdots(z+n-1)=\sum_{k=1}^n\stirling{n}{k}z^k,$$
  $$W_n(x,y,1)=y^nT_n\left(\frac{x}{y}\right),$$
  where $T_n(x)$ is the up-down run polynomial over permutations in $\msn$.
\end{itemize}

\end{proposition}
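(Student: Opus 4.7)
The plan is to prove part (i) by induction on $n$, then derive the PDE in part (ii) by multiplying the recurrence by a monomial and summing, and finally obtain the two specializations by simple manipulations of that PDE. The base case $n=1$ of part (i) is immediate since $(xD_{G'})^1 = xD_{G'}$ matches $W_{1,1,0}=1$. For the inductive step, I compute $(xD_{G'})^{n+1} = xD_{G'}\cdot (xD_{G'})^n$ using $D_{G'}(x)=y$ and $D_{G'}(y)=x$ together with the commutation $D_{G'}\cdot f = D_{G'}(f) + f\cdot D_{G'}$. Applying $xD_{G'}$ to a typical summand $W_{n,k,\ell}\,x^{k+2\ell}y^{n-k-2\ell}D_{G'}^k$ yields exactly three terms: $(k+2\ell)W_{n,k,\ell}\,x^{k+2\ell}y^{n-k-2\ell+1}D_{G'}^k$ from differentiating $x^{k+2\ell}$; $(n-k-2\ell)W_{n,k,\ell}\,x^{k+2\ell+2}y^{n-k-2\ell-1}D_{G'}^k$ from differentiating $y^{n-k-2\ell}$; and $W_{n,k,\ell}\,x^{k+2\ell+1}y^{n-k-2\ell}D_{G'}^{k+1}$ from the commutator. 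Collecting the coefficient of $x^{k+2\ell}y^{n+1-k-2\ell}D_{G'}^k$ (with obvious shifts in $k$ and $\ell$) produces the stated recurrence.

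For part (ii), I multiply the recurrence by $x^{k+2\ell}y^{n+1-k-2\ell}z^k$ and sum over $k,\ell$. After reindexing $\ell\mapsto\ell+1$ in the second term and $k\mapsto k+1$ in the third, this yields the three-variable identity $W_{n+1}(x,y,z) = xz\,W_n + xy\,\partial_x W_n + x^2\,\partial_y W_n$. To convert this into the two-term form stated in the proposition, the key observation is that each monomial $x^{k+2\ell}y^{n-k-2\ell}$ is homogeneous of total degree $n$ in $(x,y)$, so Euler's homogeneity identity gives $y\,\partial_y W_n = nW_n - x\,\partial_x W_n$. Substituting eliminates $\partial_y$ and produces the advertised formula $W_{n+1} = x(z+nx/y)W_n + xy(1-x^2/y^2)\partial_x W_n$.

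The specializations follow quickly. Setting $x=y=1$ annihilates the factor $1-x^2/y^2$, leaving the first-order recursion $W_{n+1}(1,1,z) = (z+n)W_n(1,1,z)$ with $W_0(1,1,z)=1$, which iterates to the rising factorial $z(z+1)\cdots(z+n-1) = \sum_k \stirling{n}{k}z^k$. Setting $z=1$ and then $y=1$ in the PDE recovers exactly the recurrence~\eqref{Rnx02} for $T_n(x)$; since $W_0(x,1,1)=1=T_0(x)$ and $W_1(x,1,1)=x=T_1(x)$, induction yields $W_n(x,1,1)=T_n(x)$, and the degree-$n$ homogeneity of $W_n(x,y,1)$ in $(x,y)$ then gives $W_n(x,y,1) = y^n W_n(x/y,1,1) = y^nT_n(x/y)$. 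The main (mild) obstacle is the bookkeeping of index shifts in the inductive step of part (i); the one conceptual ingredient worth flagging is the use of Euler's identity to eliminate $\partial_y$ in passing from the three-term PDE to the two-term form that appears in the statement.
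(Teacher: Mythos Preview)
The paper explicitly omits the proof of this proposition (``we end this paper by giving the following result, and omit the proof for simplicity''), so there is nothing to compare against directly. Your argument is correct and follows the same template the paper uses for Theorem~\ref{Eulerian-tree} and Propositions~\ref{Eulerian-B-tree}--\ref{Eulerian-B-tree02}: induct on $n$ by applying $xD_{G'}$ to the assumed expansion, read off the three-term recurrence, then sum to obtain the functional recurrence for $W_n(x,y,z)$. Your use of Euler's homogeneity identity to trade $\partial_y$ for $\partial_x$ is the one step that goes slightly beyond the paper's parallel computations, and it is exactly what is needed to match the two-term form stated; the specializations to $\stirling{n}{k}$ and to $T_n(x)$ via~\eqref{Rnx02} then follow as you describe.
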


\bibliographystyle{amsplain}

\begin{thebibliography}{69}
%


\bibitem{Blasiak10}
P. Blasiak, P. Flajolet, \textit{Combinatorial models of creation-annihilation}, S\'em. Lothar. Combin., \textbf{65} (2010/12), Art. B65c, 78 pp.
%
%


\bibitem{Branden08}
P. Br\"and\'{e}n, \textit{Actions on permutations and unimodality of descent polynomials}, European J. Combin., \textbf{29} (2008), 514--531.
%
\bibitem{Bre94}
F. Brenti, \textit{$q$-Eulerian polynomials arising from Coxeter groups}, European J. Combin., \textbf{15} (1994), 417--441.

\bibitem{Brenti00}
F. Brenti, \textit{A class of $q$-symmetric functions arising from plethysm},
J. Combin. Theory Ser. A, \textbf{91} (2000), 137--170.

\bibitem{Carlitz65}
L. Carlitz, \textit{The coefficients in an asymptotic expansion}, Proc. Amer. Math. Soc., \textbf{16} (1965) 248--252.


\bibitem{Chen93}
W.Y.C. Chen, \textit{Context-free grammars, differential operators and formal
power series}, Theoret. Comput. Sci., \textbf{117} (1993), 113--129.



\bibitem{Chen17}
W.Y.C. Chen, A.M. Fu, \textit{Context-free grammars for permutations and increasing trees}, Adv. in Appl. Math., \textbf{82} (2017), 58--82.


\bibitem{Chen22}
W.Y.C. Chen, A.M. Fu, \textit{A context-free grammar for the $e$-positivity of the trivariate second-order Eulerian polynomials},
Discrete Math., \textbf{345}(1) (2022), 112661.

\bibitem{Chen2301}
W.Y.C. Chen, A.M. Fu, \textit{The Dumont ansatz for the Eulerian polynomials, peak polynomials and derivative polynomials},
Ann. Combin., \textbf{27}(3) (2023), 707--735.

\bibitem{Chen23}
W.Y.C. Chen, A.M. Fu, S.H.F. Yan, \textit{The Gessel correspondence and the partial $\gamma$-positivity of the Eulerian polynomials on multiset Stirling permutations},
European J. Combin., \textbf{109} (2023), 103655.


\bibitem{Mansour10}
B.S. El-Desouky, N.P. Caki\'c, T. Mansour, \textit{Modified approach to generalized Stirling numbers via differential operators}, Appl. Math. Lett., \textbf{23} (2010), 115--120.

\bibitem{Dumont80}
D. Dumont, \textit{Une g\'en\'eralisation trivari\'ee sym\'etrique des nombres eul\'eriens}, J. Combin. Theory Ser. A, \textbf{28} (1980), 307--320.

\bibitem{Dumont96}
D. Dumont, \textit{Grammaires de William Chen et d\'erivations dans les arbres et
arborescences}, S\'em. Lothar. Combin., \textbf{37}, Art. B37a (1996), 1--21.



\bibitem{Dzhumadil14}
A. Dzhumadil'daev, D. Yeliussizov, \textit{Stirling permutations on multisets}, European J. Combin., \textbf{36} (2014), 377--392.

\bibitem{Elizalde}
S. Elizalde, \textit{Descents on quasi-Stirling permutations}, J. Combin. Theory Ser. A, \textbf{180} (2021), 105429.


\bibitem{Engbers15}
J. Engbers, D. Galvin, J. Hilyard, \textit{Combinatorially interpreting generalized Stirling numbers}, European J. Combin., \textbf{43} (2015), 32--54.

\bibitem{Engbers19}
J. Engbers, D. Galvin, C. Smyth, \textit{Restricted Stirling and Lah number matrices and their inverses}, J. Combin. Theory Ser. A,  \textbf{161} (2019), 271--298.


\bibitem{Eu17}
S.-P. Eu, T.-S. Fu, Y.-C. Liang, T.-L. Wong, \textit{On $xD$-generalizations of Stirling numbers and Lah numbers via graphs and rooks}, Electron. J. Combin., \textbf{24}(2) (2017), P2.9.

\bibitem{FS70}
D. Foata, M. Sch\"utzenberger, \textit{Th\'eorie G\'eom\'etrique des
Polyn\^omes Euleriens}, Lecture Notes in Mathematics, vol. 138,
Springer-Verlag, Berlin-New York, 1970.

\bibitem{Foata73}
D. Foata, M.-P. Sch\"utzenberger, \textit{Nombres d'Euler et permutations alternantes},
in: A survey of combinatorial theory, North-Holland, Amsterdam, 1973, 173--187.


\bibitem{Haglund12}
J. Haglund, M. Visontai, \textit{Stable multivariate Eulerian polynomials and
generalized Stirling permutations}, European J. Combin., \textbf{33} (2012), 477--487.




\bibitem{Hwang20}
H.-K. Hwang, H.-H. Chern, G.-H. Duh, Guan-Huei, \textit{An asymptotic distribution theory for Eulerian recurrences with applications},
Adv. in Appl. Math., \textbf{112} (2020), 101960.


\bibitem{Ji24}
K.Q. Ji, Z. Lin, \textit{The binomial-Stirling-Eulerian polynomials}, European J. Combin., \textbf{120} (2024), 103962.


\bibitem{Ma131}
S.-M. Ma, \textit{Some combinatorial arrays generated by context-free grammars}, European J. Combin.,
\textbf{34} (2013), 1081--1091.

\bibitem{Ma15}
S.-M. Ma, T. Mansour, \textit{The $1/k$-Eulerian polynomials and $k$-Stirling permutations}, Discrete Math., \textbf{338} (2015) 1468--1472.


\bibitem{Ma20}
S.-M. Ma, J. Ma, Y.-N. Yeh, \textit{David-Barton type identities and alternating run polynomials}, Adv. in Appl. Math., \textbf{114} (2020), 101978.


\bibitem{Ma23}
S.-M. Ma, H. Qi, J. Yeh, Y.-N. Yeh, \textit{Stirling permutation codes}, J. Combin. Theory Ser. A, \textbf{199} (2023), 105777.


\bibitem{Ma24}
S.-M. Ma, J. Ma, J. Yeh, Y.-N. Yeh, \textit{Excedance-type polynomials, gamma-positivity and alternatingly increasing property}, European J. Combin., \textbf{118} (2024), 103869.

\bibitem{Schork21}
M. Schork, \textit{Recent developments in combinatorial aspects of normal ordering}, Enumer. Combin. Appl., 1 (2021), Article S2S2.

\bibitem{Sloane}
N.J.A. Sloane, \textit{The On-Line Encyclopedia of Integer Sequences},
published electronically at https://oeis.org, 2010.



\bibitem{Stanley11}
R.P. Stanley, \textit{Enumerative Combinatorics}, Volume 1, second ed., Cambridge University Press, 2011.


\bibitem{W1}
H. Wilf, \textit{Generating Functionology}, Academic Press, New York, 1990.


\bibitem{Zhuang17}
Y. Zhuang, \textit{Counting permutations by runs}, J. Combin. Theory Ser. A, \textbf{142} (2016), 147--176.
\end{thebibliography}

\end{document}